\newcommand{\m}{\mathcal{M}}
\newcommand{\mo}{\mathcal{M}_0}
\newcommand{\mt}{\mathcal{M}_t}
\newtheorem{theorem}{Theorem}[section]
\newtheorem{lem}[theorem]{Lemma}
\newtheorem{prop}[theorem]{Proposition}
\newtheorem{cor}[theorem]{Corollary}
\newcommand{\xb}{\bar{x}}
\newcommand{\ls}{\Delta_{\dot{\sigma}}}
\newcommand{\ts}{tr_{\dot{\sigma}}}
\newcommand{\qb}{\bar{q}}
\newcommand{\ld}{\lambda}
\theoremstyle{definition}
\theoremstyle{remark}
\newtheorem{remark}[theorem]{Remark}
\numberwithin{equation}{section}
\begin{document}
\title[Volume preserving  flow by powers of symmetric polynomials]{Volume preserving flow by powers of symmetric polynomials in the principal curvatures }
\author[Maria Chiara Bertini]{Maria Chiara Bertini\textsuperscript{1}} \thanks{\textsuperscript{1} Dipartimento di Matematica e Fisica, Universit\`a di Roma ``Roma Tre'', Largo San Leonardo Murialdo 1, 00146, Roma, Italy. E-mail: bertini@mat.uniroma3.it} 
 \author[Carlo Sinestrari]{Carlo Sinestrari\textsuperscript{2}}  \thanks{\textsuperscript{2} (correspond. author) Dipartimento di Ingegneria Civile e Ingegneria Informatica, Universit\`a di Roma ``Tor Vergata'', Via Politecnico 1, 00133, Roma, Italy. E-mail: sinestra@mat.uniroma2.it}
\begin{abstract}
We study a volume preserving curvature flow of convex hypersurfaces, driven by a power of the $k$-th elementary symmetric polynomial in the principal curvatures. Unlike most of the previous works on related problems, we do not require assumptions on the curvature pinching of the initial datum. We prove that the solution exists for all times and that the speed remains bounded and converges to a constant in an integral norm. In the case of the volume preserving scalar curvature flow, we can prove that the hypersurfaces converge smoothly to a round sphere.
\end{abstract}
\maketitle
\noindent {\bf MSC 2010 subject classification} 53C44, 35B40 \bigskip
\section{Introduction}
Let $\m$ be an oriented, compact  $n$-dimensional  manifold without boundary. We embed $\m$ in the Euclidean $(n+1)$-space by $F_0:\m\rightarrow \mathbb{R}^{n+1}$, and denote its image by $\mo=F_0(\m)$. We assume that $\mo$ is strictly convex.
Then we consider a family of maps $F:\m\times[0,T)\rightarrow\mathbb{R}^{n+1}$, with $F_t:=F(\cdot,t):\m\rightarrow \mathbb{R}^{n+1}$  satisfying \smallskip
\begin{equation}\label{fl}
\left\{
\begin{array}{l}
\partial_t F(x,t)=[-\sigma(x,t)+h(t)]\nu(x,t) \medskip\\
F(x,0)=F_0(x),\\
\end{array}
\right.
\end{equation}
where:
\begin{itemize}
\item  $\nu$ is  the outer unit normal to the evolving hypersurface $\mt:=F_t(\m)$;
\item $\sigma(x,t)=E^{\alpha}_k$ with $\alpha >0$ and $E_k$ is the $k$-th symmetric polynomial in the principal curvatures, i.e.
$$E_k(x,t)=\sum_{1\leq i_1<\dots<i_k\leq n }\lambda_{i_1}(x,t)\dots\lambda_{i_k}(x,t),$$ with $\lambda_1,\dots,\lambda_n$ the principal curvatures of $\mt$ and  $k \in \{1,\dots,n \}$;
\item The function $h(t)$ is defined as
 \begin{equation}\label{vpr}h(t):=\frac{1}{A(\mt)}\int_{\mt} \sigma d\mu,\end{equation}
 where $A(\mt)$ is the $n$-dimensional measure of $\mt$.
\end{itemize}
Such a definition of $h(t)$ ensures that the  volume $Vol(\Omega_t)$ is preserved by the flow, where $\Omega_t$ is the $(n+1)$-dimensional region bounded by $\mt$.

We will prove the following result.
\begin{theorem}\label{mt}Let  $F_0:\m\rightarrow \mathbb{R}^{n+1}$, with $n \geq 1$, be a smooth embedding of an oriented, compact   $n$-dimensional manifold without boundary, such that $F_0(\m)$ is strictly convex. Then 
\begin{itemize}
\item [(i)] the flow \eqref{fl} has a unique smooth solution, which remains strictly convex and exists for all times $t\in[0,\infty)$;
\item[(ii)] the speed $\sigma$ is bounded from above along the flow and converges to its mean value in $L^2$-norm
$$
\int_{\mt} |\sigma - h(t)|^2 \, d\mu \to 0 \mbox{ as }t \to \infty;
$$
\item[(iii)]  if $\alpha=1$ and $k=2$, i.e. $\sigma$ is the scalar curvature, then $\mt$ converges smoothly with exponential rate to a round sphere which encloses the same volume as $\m_0$.
\end{itemize}
\end{theorem}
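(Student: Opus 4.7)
\emph{Proof plan.} The argument splits naturally along the three assertions, with each part built on the previous. For part (i), short-time existence is standard since $E_k^\alpha$ is smooth and strictly parabolic on the open cone of convex Weingarten operators, and the nonlocal term $h(t)\nu$ does not affect the principal symbol. To propagate strict convexity I would apply the maximum principle to the smallest principal curvature in the evolution of the Weingarten tensor, noting that $h(t)$ enters only as a zeroth-order reaction term that is harmless for the lowest eigenvalue. An upper bound on the speed $\sigma$ can be obtained by Tso's trick adapted to the volume-preserving setting, working with a quotient of $\sigma$ by an affine function of the support function relative to an interior origin; the required uniform inradius bound comes from the preserved enclosed volume together with the decreasing surface area of part (ii). Convexity and the speed bound together control the largest principal curvature, and Krylov--Safonov followed by Schauder estimates then yield uniform higher-order regularity and long-time existence.

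Part (ii) rests on the monotonicity of the surface area:
\[
\frac{d}{dt} A(\mt) = \int_{\mt} H \left( h(t) - \sigma \right) d\mu,
\]
and combining Minkowski--Hsiung type identities with Newton--Maclaurin inequalities among the $E_j$'s one checks that this derivative is non-positive, with equality only on spheres. Sharpening the same computation, I would aim for a Poincar\'e-type estimate
\[
\int_{\mt} (\sigma - h(t))^2 \, d\mu \leq -C \, \frac{d}{dt} A(\mt),
\]
with $C$ depending only on the uniform bounds from (i); since $A$ is monotone and bounded below by isoperimetry, the right-hand side is integrable in $t$, which forces the stated $L^2$ decay as $t \to \infty$.

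For part (iii), with $k=2$ and $\alpha=1$ the speed is $\sigma = E_2 = \tfrac12(H^2 - |A|^2)$, so a standard integral identity relates $\int_{\mt} (\sigma - h(t))^2 \, d\mu$ to the $L^2$ norm of the traceless second fundamental form, modulo lower-order terms controlled by the uniform bounds from (i). The $L^2$ conclusion of (ii) then forces the traceless part of $A$ to vanish in $L^2$, and interpolation with the higher-order regularity from (i) upgrades this to decay in every $C^m$ norm; hence the $\mt$ converge in $C^\infty$ to a totally umbilical closed hypersurface, i.e. a round sphere, whose radius is fixed by the preserved volume. Exponential convergence is then obtained by linearising the flow around the limit sphere and exploiting the spectral gap of the linearised operator on the orthogonal complement of the translational zero modes, which are killed by centring coordinates at the barycenter. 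The point I expect to be hardest throughout is the absence of an initial curvature pinching assumption: this blocks the pointwise pinching propagation of the standard Andrews--Huisken--McCoy scheme, so every estimate towards the sphere must be routed through integral quantities, and making the Poincar\'e-type inequality in (ii) and the $L^2$-to-$C^\infty$ promotion in (iii) quantitative is the core of the argument.
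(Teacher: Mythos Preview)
Your outline has several genuine gaps that the paper has to work hard to fill, and in places your claimed route does not actually go through.

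In part (i), your assertion that ``convexity and the speed bound together control the largest principal curvature'' is false for $k>1$: the inequality $E_k\leq C$ together with $\lambda_i>0$ does not prevent one curvature from blowing up while another tends to zero. The paper addresses this explicitly. Long-time existence is obtained only by combining the uniform speed bound with a \emph{time-dependent} lower curvature estimate (from the Gauss-map evolution of the inverse Weingarten map, giving $\lambda_{\min}(t)\geq (\lambda_{\min}(0)^{-1}+h^* t)^{-1}$), which suffices on any finite interval but gives no uniform-in-$t$ curvature bound. Consequently, you do not get uniform higher-order regularity at this stage, and you cannot invoke Krylov--Safonov/Schauder globally in time as you propose. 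Also, for the inradius bound feeding into Tso's trick, the monotone quantity is not the area in general: it is the mixed volume $V_{n-k+1}(\Omega_t)=c\int_{\mt}E_{k-1}\,d\mu$, whose time derivative equals $-k\int(\sigma-h)(E_k-h^{1/\alpha})\,d\mu\leq 0$. Your displayed formula $\frac{d}{dt}A(\mt)=\int H(h-\sigma)\,d\mu$ is correct, but there is no reason this is nonpositive when $k\neq 1$.

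For part (ii), once you replace the area by the correct mixed volume the integrability argument does give $\int_0^\infty\!\int_{\mt}|\sigma-h|^2\,d\mu\,dt<\infty$, but that only yields $\liminf_{t\to\infty}\int|\sigma-h|^2=0$; promoting this to a genuine limit requires an additional one-sided bound on $\frac{d}{dt}\int|\sigma-h|^2$, which the paper can only establish when $\alpha=1$ (using that $\Delta_{\dot\sigma}$ is then self-adjoint and that the reaction terms are expressible through bounded mixed volumes). Your plan does not address this step.

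Part (iii) is where the gap is widest. Because of the failure above, you have neither a uniform upper bound on $|A|$ nor a uniform lower bound on the curvatures, so there is no global parabolicity and no interpolation available; your proposed ``$L^2\to C^\infty$'' promotion has nothing to interpolate against. The paper's route is quite different: first a special maximum-principle argument, using that for $k=2$ the evolution of $H$ can be rewritten so that the reaction term is bounded by $C-\tfrac{\beta}{n}H^2$ when $E_2$ is bounded, yields a uniform upper bound on $H$. Then, instead of trying to relate $\int(E_2-h)^2$ directly to the traceless second fundamental form (no such identity is available), the paper follows Ros's strategy to show $\int_{\mt}\tilde E_1^{-1}\,d\mu-(n+1)\mathrm{Vol}(\Omega_t)\to 0$ along a subsequence, and then invokes the quantitative Alexandrov stability estimate of Magnanini--Poggesi to deduce $R^+(t)-R_-(t)\to 0$. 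Only after this does an inverse Tso-type argument give a uniform lower bound on $E_2$, hence uniform parabolicity, higher regularity, and finally exponential convergence via the pinching result of Cabezas-Rivas--Sinestrari. Your linearisation approach would be viable only \emph{after} all of this, not as a substitute for it.
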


Some cases of these flows have been studied in the past literature. When $\alpha=1/k$, they belong to the class considered by McCoy \cite{Mc2} who proved smooth convergence to a sphere. The same result was obtained by Cabezas-Rivas and the second author \cite{CaSi} for $\alpha>1/k$ assuming a pinching condition on the principal curvatures of the initial value. More recently \cite{Si}, the second author has considered the case $k=1$ for any $\alpha>0$, proving convergence to a sphere for general strictly convex data.

In general, there is a wide literature about curvature flows of convex hypersurfaces, both in the standard version (without the forcing term) and in the constrained one. The starting point is the result by Huisken \cite{Hu1} who proved that any closed convex hypersurface moving by mean curvature flow shrinks to a point in finite time with a spherical profile. The corresponding result in the volume preserving version is also due to Huisken, who in \cite{Hu2} showed that, starting from a closed  convex datum, the solution exists for all times and converges smoothly to a round sphere. Since then, many authors have studied curvature flows where the speed is a symmetric homogeneous function of the principal curvatures. The case of homogeneity degree equal to one is better known and investigated. There are results of convergence to a round point (for standard flows) or to a round sphere (for constrained flows) for very general speed functions, see for example \cite{AnMcZh,Mc2}. When the homogeneity degree is greater than one, the analysis is more difficult and the corresponding results have usually been proved only under some additional hypotheses. Typically, authors have imposed suitable pinching conditions on the principal curvatures of the initial data, or have restricted the analysis to particular dimensions or to data with symmetries, see for example \cite{AlSi,AM,Ch,Schn,Sch2} for standard flows, or \cite{CaSi,Mc3} for volume preserving flows. In fact, in most cases convergence results are obtained by considering the invariance or improvement of the curvature pinching, a property which is difficult to study or may even fail for general convex hypersurfaces evolving by speeds with general homogeneity.

Recently, in the above recalled paper \cite{Si}, it was observed that certain flows with homogeneity different from one enjoy better properties in the constrained case than in the standard one. In fact, a typical property of constrained flows is the monotonicity of a suitable isoperimetric ratio, and this provides an alternative technique to the study of the behaviour of convex solutions. In \cite{Si}, the property was used to control the outer and inner radii and to find an upper curvature bound, with no need to assume curvature pinching. A similar procedure had been used previously in the study of the anisotropic mean curvature flow \cite{An2}. Later, in \cite{BeSi}, the volume preserving flow driven by general nonhomogeneous functions of the mean curvature was studied, finding again convergence to a round sphere. An important step in this last paper was the derivation of a lower bound on the speed which avoids the use of regularity results from degenerate parabolic equations as it was done in \cite{CaSi,Sch2,Si}. 

While \cite{BeSi,Si} studied speeds given by functions of the mean curvature, we consider here general symmetric polynomials. The flows we study are related to the mixed volumes, which are quantities that generalize the notion of area and volume of a convex body, and that can be expressed as boundary integrals of the polynomials $E_k$. Using the monotonicity of a suitable mixed volume under the flow, we obtain a bound on the inner and outer radius of our hypersurface, which in turn implies a uniform upper bound on the speed and the global existence of the solution. In contrast with the cases considered in \cite{BeSi,Si}, the bound on the speed no longer implies a bound on the curvatures and the study of the asymptotic behaviour is more difficult. We can show that the speed tends to a constant in the $L^2$-norm along suitable time sequences; since the only closed hypersurfaces with  $E_k$ constant are the spheres, this suggests that the flow should converge to a sphere. However, in the general case we lack a bound from below on the curvature and our estimates are not strong enough to guarantee the existence of a smooth limit as time goes to infinity. We can solve these difficulties in the case of the scalar curvature flow, where the evolution equation of the mean curvature has a favorable structure, and we can prove a uniform bound on the principal curvatures using the boundedness of the speed and a maximum principle argument. We then prove convergence to a sphere by combining an argument of Ros \cite{Ro} for constant mean curvature hypersurfaces with a recent $L^1$-stability estimate due to Magnanini and Poggesi \cite{MaPo}.
\bigskip

\noindent{\bf Note}. The results of this paper are part of the first author's PhD thesis. Shortly before submitting the paper, a related preprint by B. Andrews and Y. Wei \cite{AW} has appeared, which uses different methods and obtains further convergence results for the flows considered here.

\section{Preliminaries}\label{Preliminaries}
\subsection*{Notations}
Let $F:\m\rightarrow \mathbb{R}^{n+1}$ be an embedded hypersurface with local coordinates $(x^1,\cdots, x^n)$. We always assume $n>1$. We endow $\m$ with the induced metric $g=(g_{ij})$ given by
$$g_{ij}=\left(\frac{\partial F}{\partial x^i},\frac{\partial F}{\partial x^j}\right)$$ where $(\cdot,\cdot)$ is the standard Euclidean inner product. The inverse of $g_{ij}$ will be written as $g^{-1}=(g^{ij})$. We also denote respectively by $\nabla$ and $A=(h_{ij})$ the Levi-Civita connection and the second fundamental form of $\m$, while the measure is $d\mu=\sqrt{\det g_{ij}}\, dx$. The principal curvatures are denoted by $\lambda_1,\dots,\lambda_n$, and the mean curvature by $H=\lambda_1+\dots+\lambda_n$. We say that the hypersurface is strictly convex if all $\lambda_i$'s are positive. 

As usual, we always sum on repeated indices, and we lower or lift
tensor indices via $g$, e.g. the Weingarten operator is given by $$h^i_j=h_{kj}g^{ik}.$$
Given tensors $T=(T^{i_1\dots i_s}_{j_1\dots j_r})$ and $S=(S^{i_1\dots i_s}_{j_1\dots j_r})$ on $\m$, we use brackets to denote their inner product
$$\langle T,S\rangle=T^{i_1\dots i_s}_{j_1\dots j_r}S_{i_1\dots i_s}^{j_1\dots j_r}.$$
In particular, the square of the norm is given by
$$|T|^2=T^{i_1\dots i_s}_{j_1\dots j_r}T_{i_1\dots i_s}^{j_1\dots j_r}.$$
Given a point $q\in \mathbb{R}^{n+1}$, the {\em support function} of $\m$ with respect to $\xb$ is
$$u_{q}(x):=(F(x)-q,\nu(x)),$$
where $\nu(x)$ is the outer unit normal vector of $\m$ at the point $x$. The subscript $q$ will be omitted whenever there will be no ambiguity. 

It is convenient to define the symmetric polynomials also for $k=0,n+1$ setting $E_0 \equiv 1$ and $E_{n+1} \equiv 0$.
In order to simplify some formulas, we introduce the normalized symmetric polynomials
$$
\tilde E_k := \binom{n}{k}^{-1} E_k, \qquad k=0,\dots,n,
$$
which satisfy  $\tilde E_k(1,\dots,1)=1$. For the purposes of this paper, these functions will only be evaluated in the positive cone $\Gamma_+:= \{ (\ld_1,\dots,\ld_n) \ : \ \ld_1 > 0, \dots \ld_n>0 \}.$  

The polynomials $E_k$ and $\tilde E_k$ can be also regarded as a function of the Weingarten operator of $\m$. We will use the same symbol in the two cases, since the meaning will be clear from the context.  We recall some well known properties, see e.g. \S XV.4 in \cite{Li} or Lemma 2.1 in \cite{CaSi}.
\begin{lem}\label{dispolsim} The following relations hold, for any $k=1,\dots,n$ and $(\ld_1,\dots,\ld_n) \in \Gamma_+$.
\begin{trivlist}
\item $(i)$ $\frac{\partial E_k}{\partial \lambda_i}\lambda_i^2=HE_k-(k+1)E_{k+1}\geq\frac{k}{n}HE_k$.
\item $(ii)$ $\tilde E_{k+1}^{\frac{1}{k+1}} \leq \tilde E_k^{\frac{1}{k}}$, with equality if and only if $\ld_1=\dots=\ld_n$ and $k<n$.
\item $(iii)$ As a function on $\mt$, $\nabla^i \frac{\partial E_k}{\partial h_{j}^i}=0$ for any $j=1,\dots,n$.
\item $(iv)$ If $\sigma=E_k^{\alpha}$, then  $\frac{\partial\sigma}{\partial\lambda_i}\lambda_i=\alpha k\sigma$. 
\end{trivlist}
\end{lem}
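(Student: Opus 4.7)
My plan is to address the four items separately, since they are classical facts about elementary symmetric polynomials and the Codazzi equations of a hypersurface. The only mildly delicate point is the inequality in $(i)$.

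For $(i)$, the starting observation is that $\partial E_k / \partial \lambda_i = E_{k-1}(\hat\lambda_i)$, the $(k-1)$-th elementary polynomial in the remaining variables. Splitting $E_k = \lambda_i E_{k-1}(\hat\lambda_i) + E_k(\hat\lambda_i)$ and multiplying by $\lambda_i$ gives
$\lambda_i^2\,\partial E_k/\partial\lambda_i = \lambda_i E_k - \lambda_i E_k(\hat\lambda_i)$.
Summing on $i$ and using $\sum_i \lambda_i E_k(\hat\lambda_i) = (k+1)E_{k+1}$ (each $(k+1)$-term appears once for each of its $k+1$ indices) yields the equality $HE_k-(k+1)E_{k+1}$. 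For the inequality I would invoke Newton's inequality $\tilde E_k^{\,2}\ge \tilde E_{k-1}\tilde E_{k+1}$: iterating the resulting chain
$\tilde E_{k+1}/\tilde E_k \le \tilde E_k/\tilde E_{k-1} \le \dots \le \tilde E_1 = H/n$,
and translating back via the binomial factors, one gets $(k+1)E_{k+1} \le \frac{n-k}{n}HE_k$, which is exactly the lower bound $\frac{k}{n}HE_k$.

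Item $(ii)$ is Maclaurin's inequality, which falls out of the same Newton chain by telescoping; the equality case is well known and characterizes $\lambda_1=\dots=\lambda_n$ as long as $k<n$. Item $(iv)$ is just Euler's identity applied to the function $E_k^\alpha$, which is homogeneous of degree $\alpha k$. For $(iii)$, I recall that $\partial E_k/\partial h^i_j$ is (a multiple of) the $(k-1)$-th Newton tensor. Computing
$\nabla^i\bigl(\partial E_k/\partial h^i_j\bigr) = \frac{\partial^2 E_k}{\partial h^i_j\,\partial h^k_l}\,\nabla^i h^k_l$,
the Hessian of $E_k$ is symmetric under swapping the pairs $(i,j)\leftrightarrow (k,l)$, while the Codazzi equation in Euclidean ambient space gives $\nabla_i h_{kl} = \nabla_k h_{il}$; pairing a symmetric object with an object that is symmetric in the contracted indices against a further antisymmetric structure makes the total contraction vanish, yielding the desired divergence-free property.

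The main (and only real) obstacle is the lower bound in $(i)$: the equality is a direct combinatorial identity, but the estimate requires the Newton/Maclaurin machinery. Once that is in hand, the rest of the lemma is bookkeeping, and the paper in any case cites \cite{Li,CaSi} for standard references covering these facts.
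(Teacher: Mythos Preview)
The paper does not actually prove this lemma: it merely states the four items and refers the reader to \S XV.4 of \cite{Li} and Lemma~2.1 of \cite{CaSi}. Your outline is therefore already more than the paper offers, and your treatment of $(i)$, $(ii)$ and $(iv)$ is correct --- in particular, the reduction of the inequality in $(i)$ to Newton's inequalities, yielding $(k+1)E_{k+1}\le \tfrac{n-k}{n}HE_k$, is exactly the standard route.

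The only place your sketch is imprecise is $(iii)$. The phrase ``pairing a symmetric object with an object that is symmetric in the contracted indices against a further antisymmetric structure'' does not describe a mechanism that actually forces vanishing: the mere symmetry of the Hessian under $(i,j)\leftrightarrow(k,l)$ together with Codazzi's symmetry $\nabla_i h_{kl}=\nabla_k h_{il}$ is not by itself enough to make the contraction zero. The cleanest argument writes $\partial E_k/\partial h^i_j$ as the Newton tensor $(T_{k-1})^i_j$, uses the recursion $T_m = E_m\,\mathrm{Id} - A\cdot T_{m-1}$, and proves $\nabla_i(T_m)^i_j=0$ by induction on $m$, invoking Codazzi at each step; alternatively one expresses $E_k$ via the generalized Kronecker delta, where the \emph{antisymmetry} (not symmetry) of the second derivative in the contracted indices does the job. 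Either way the result is classical (Reilly), and since the paper only cites references here, this is a matter of exposition rather than a genuine gap.
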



\subsection*{Short time existence and evolution equations}
Given $\sigma$ as in \eqref{fl}, we shall denote $\ls=\dot{\sigma}^{ij}\nabla_i\nabla_j$, where $\dot{\sigma}^{ij}=\frac{\partial \sigma}{\partial h_{ij}}$. Given matrices $B$ and $\tilde{B}$, we define
$$tr_{\dot{\sigma}}(B)=\dot{\sigma}^{ij}B_{ij}\hspace{0.5cm}\text{and}\hspace{0.5cm}
\ddot{\sigma}(B,\tilde{B})=\frac{\partial^2\sigma}{\partial h_{ij}\partial h_{rs}}B_{ij}\tilde{B}_{rs}.$$
It is well known that a flow of the form \eqref{fl} without the volume preserving term is parabolic if at any point
\begin{equation}\label{par}
\frac{\partial \sigma}{\partial\lambda_i}>0,\hspace{1cm}i=1,\dots,n.
\end{equation}
In the case $\sigma=E_k^{\alpha}$, this is satisfied on any strictly convex hypersurface.
Parabolicity ensures the local existence and uniqueness of the solution.
The additional term $h(t)$ only depends on time and does not interfere with the parabolicity of the equation. Hence, we have the following result, see \cite{Hu2, HuPo,Mc2, P} for more details.
\begin{theorem} Let $F_0:\m\rightarrow \mathbb{R}^{n+1}$ be a smooth embedding of an oriented, compact  $n$-dimensional  manifold without boundary, such that $F_0(\m)$ is strictly convex. Then the flow \eqref{fl} has a unique smooth solution $\mt$ defined on a maximal time interval $[0,T)$. If $T<+\infty$, then either $\liminf_{t \to T} \min_{\mt}\frac{\partial \sigma}{\partial\lambda_i}=0$ for some $i$, or $\limsup_{t \to T} \max_{\mt} |A|^2=+\infty$.
\end{theorem}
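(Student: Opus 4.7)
The plan is to reformulate \eqref{fl} as a scalar fully nonlinear parabolic PDE, quote standard short-time existence and regularity for such equations, and then obtain the continuation criterion by a parabolic bootstrap under the assumption that neither alternative occurs.

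First, since $\mo$ is strictly convex, for small $t$ the evolving hypersurface $\mt$ can be written as a normal graph over $\mo$, say $F(x,t)=F_0(x)+\rho(x,t)\,\nu_0(x)$ for a scalar function $\rho$, or equivalently described by its support function on the Gauss sphere. Either way, the flow reduces to a scalar equation of the form
\[
\partial_t \rho = G(x,\rho,\nabla\rho,\nabla^2\rho) + h(t),
\]
with $G$ smooth on the open set of arguments corresponding to strictly convex hypersurfaces. Strict convexity of $\mo$, combined with Lemma \ref{dispolsim} and the formula $\partial\sigma/\partial\lambda_i=\alpha E_k^{\alpha-1}\,\partial E_k/\partial\lambda_i$ (with $\partial E_k/\partial\lambda_i$ strictly positive on $\Gamma_+$), implies that $\dot{\sigma}^{ij}$ is positive definite at $t=0$, so the equation is uniformly parabolic in a neighbourhood of the initial datum.

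Second, the nonlocal term $h(t)$ from \eqref{vpr} depends only on $t$ through $\int_{\mt}\sigma\,d\mu$ and $A(\mt)$, and acts as a globally defined but spatially constant forcing that is bounded and smooth as long as the solution is smooth and strictly convex. Standard short-time existence and uniqueness for fully nonlinear parabolic equations, e.g.\ via Hamilton's implicit function theorem between H\"older spaces, or via Evans-Krylov combined with a contraction mapping argument, then yields a unique smooth solution $\mt$ on a maximal interval $[0,T)$. The usual parabolic bootstrap upgrades regularity from $C^{2,\beta}$ to $C^\infty$.

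Third, for the continuation criterion I argue by contradiction: suppose $T<\infty$ while $\liminf_{t\to T}\min_{\mt}\partial\sigma/\partial\lambda_i>0$ for every $i$ and $\sup_{t<T}\max_{\mt}|A|^2<\infty$. Since the hypersurface is convex, the second assumption bounds each $\lambda_i$ from above, while the first yields uniform ellipticity of $\dot{\sigma}^{ij}$ near $T$. In particular $\sigma$, $h(t)$, and all the coefficients of the linearized PDE are uniformly bounded on $[0,T)$. Krylov-Safonov then gives a uniform $C^\beta$-estimate on $\sigma$, Evans-Krylov upgrades $\rho$ to $C^{2,\beta}$, and Schauder iteration produces uniform $C^\infty$ bounds. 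A smooth limit $\m_T$ exists and remains strictly convex, so the flow restarts at $T$, contradicting maximality. The principal subtlety throughout is the nonlocal forcing $h(t)$, but because it depends only on $t$ and is uniformly controlled by the pointwise bounds above, it is absorbed harmlessly into each step of the standard argument.
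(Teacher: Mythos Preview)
The paper does not actually supply a proof of this statement: it simply records the result and refers the reader to \cite{Hu2,HuPo,Mc2,P} for the details. Your sketch is precisely the standard argument found in those references, and the overall strategy---reduce to a scalar fully nonlinear parabolic equation, invoke short-time existence, then bootstrap under the negation of both alternatives---is correct.

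Two points in your write-up deserve tightening. First, you invoke convexity of $\mt$ near $T$ (``Since the hypersurface is convex\ldots'' and ``remains strictly convex''), but at this stage of the paper convexity preservation has not yet been proved; that is Proposition~\ref{conv}. Fortunately you do not need it: a bound on $|A|^2=\sum\lambda_i^2$ already controls each $|\lambda_i|$ without any sign assumption, and to restart the flow at $\m_T$ you only need uniform parabolicity there, which is exactly what the hypothesis $\liminf\min\partial\sigma/\partial\lambda_i>0$ provides. Second, the appeal to Evans--Krylov is glib: $\sigma=E_k^\alpha$ is in general neither concave nor convex in the second fundamental form when $\alpha k\neq 1$. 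The standard remedy, used in \cite{Mc2,CaSi} and implicitly in the proof of Proposition~\ref{conv} here, is to pass to the Gauss-map parametrization and work with $\Phi(r_1,\dots,r_n)=\sigma(1/r_1,\dots,1/r_n)^{-1/(\alpha k)}$, which \emph{is} concave in the inverse Weingarten map; alternatively one uses that $\sigma$ itself satisfies a uniformly parabolic \emph{linear} equation (Proposition~\ref{eveq}) and obtains H\"older regularity of $\sigma$ via Krylov--Safonov before bootstrapping. Either route closes the gap.
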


In the next proposition we list the evolution equations for the main geometric quantities associated with the flow \eqref{fl}, which can be computed similarly to \cite{Hu1}. A detailed computation can be found in \cite{Ca}.
\begin{prop}\label{eveq}
Consider a solution of the flow \eqref{fl}, with $\sigma$ a symmetric $(\alpha k)$-homogeneous function of $\lambda_i$ and $h(t)$ a smooth function. Then the following equations hold
\begin{align*}
&\partial_t g_{ij} = 2(-\sigma +h)h_{ij},\\
&\partial_t g^{ij} = -2(-\sigma +h)h^{ij},\\
&\partial_t \nu=\nabla \sigma,\\
&\partial_t d\mu = H(-\sigma+h)d\mu,\\
&\partial_t h_j^i = \nabla^i\nabla_j \sigma - (h-\sigma) h^i_{l}h^l _j \\
&\hphantom{\partial_t h_j^i}= \ls h^i_j+\ddot{\sigma}(\nabla^i A,\nabla_j A)+\ts(h_{ml}h^l_r)h^i_{j}-(h+(\alpha k-1)\sigma)h^i_{s}h^s _j,\\
&\partial_t H = \ls H +tr_{g^{-1}}\left[\ddot{\sigma}(\nabla_i A,\nabla_j A)\right]+H\ts(h_{ml}h^l_r)-(h+(\alpha k-1)\sigma)|A|^2,\\
&\partial_t \sigma = \ls \sigma+(\sigma-h)\ts(h_{ml}h^l_r),\\
&\partial_t u = \ls u+\ts(h_{ml}h^l_r)u-(\alpha k+1)\sigma+h.
\end{align*}
In addition, if $h(t)$ is defined as in \eqref{vpr}, the volume of the region $\Omega_t$ enclosed by $\m_t$ is constant in time.
\end{prop}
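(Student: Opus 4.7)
The plan is to derive each evolution equation by direct computation from the prescribed flow $\partial_t F = (h-\sigma)\nu$, using the Gauss--Weingarten identities, Codazzi's identity, and the Gauss equation on $\mt$. Writing $\phi = h-\sigma$ for brevity, I first differentiate $g_{ij}=(F_{,i},F_{,j})$ and use $(\nu,F_{,j})=0$ to get $\partial_t g_{ij}=2\phi h_{ij}$; inverting $g$ yields $\partial_t g^{ij}=-2\phi h^{ij}$. Since $|\nu|=1$, $\partial_t\nu$ is tangential, and differentiating $(\nu,F_{,j})=0$ in $t$ gives $(\partial_t\nu,F_{,j})=-\phi_{,j}$; because $h(t)$ is spatially constant this yields $\partial_t\nu=\nabla\sigma$. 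Jacobi's formula then produces $\partial_t d\mu=\tfrac{1}{2}g^{ij}\partial_t g_{ij}\,d\mu=\phi H\,d\mu$.

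For the second fundamental form I differentiate $h_{ij}=-(F_{,ij},\nu)$ and expand $\partial_t F_{,ij}=(\phi\nu)_{,ij}$ via Gauss--Weingarten. The $\nu$-component of the derivatives produces the Hessian $-\nabla_i\nabla_j\sigma$, while the tangential contributions combine with $\partial_t\nu$ to yield the quadratic term $-\phi\,h_i^l h_{lj}$. Raising one index via $\partial_t g^{ij}$ gives the first form $\partial_t h^i_j=\nabla^i\nabla_j\sigma-(h-\sigma)h^i_l h^l_j$, and tracing in $i=j$ yields the stated evolution of $H$. To pass to the second (Simons-type) form I write
\[
\nabla_i\nabla_j\sigma=\ddot{\sigma}(\nabla_i A,\nabla_j A)+\dot{\sigma}^{rs}\nabla_i\nabla_j h_{rs},
\]
and use Codazzi together with the commutator identity $[\nabla_a,\nabla_b]h_{cd}$ expressed via Gauss's equation $R_{abcd}=h_{ac}h_{bd}-h_{ad}h_{bc}$ to transfer the derivatives onto the last pair of indices, converting $\dot{\sigma}^{rs}\nabla_i\nabla_j h_{rs}$ into $\ls h_{ij}$ plus quadratic combinations of $h$. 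The homogeneity identity $\dot{\sigma}^{ij}h_{ij}=\alpha k\sigma$ from Lemma~\ref{dispolsim}(iv) then simplifies the trace-like terms into the two pieces $\ts(h_{ml}h^l_r)h^i_j$ and $(\alpha k-1)\sigma h^i_s h^s_j$ that appear in the statement.

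For $\partial_t\sigma$ I use $\partial_t\sigma=\dot{\sigma}^{ij}\partial_t h_{ij}$ and substitute the evolution of $h_{ij}$; the second-order term contracts to $\ls\sigma$, while the remainder collapses to $(\sigma-h)\ts(h_{ml}h^l_r)$ after using Codazzi together with property~(iii) of Lemma~\ref{dispolsim} to handle the mixed terms. For the support function, $\partial_t u=\phi+(F-q,\nabla\sigma)$; setting $X_k=(F-q,F_{,k})$ one has $\nabla_i u=h_i^k X_k$ and $\nabla_j X_k=g_{jk}-h_{jk}u$, so applying $\dot{\sigma}^{ij}\nabla_j\nabla_i$ and using Codazzi (to rewrite $\dot{\sigma}^{ij}\nabla_j h_i^k=\nabla^k\sigma$) together with Euler's relation $\dot{\sigma}^{ij}h_{ij}=\alpha k\sigma$ gives
\[
\ls u=(F-q,\nabla\sigma)+\alpha k\,\sigma-\ts(h_i^k h_{jk})\,u.
\]
Solving for $(F-q,\nabla\sigma)$ and substituting into $\partial_t u$ produces the stated equation. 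Volume preservation is immediate from the first variation formula: $\tfrac{d}{dt}\mathrm{Vol}(\Omega_t)=\int_{\mt}\phi\,d\mu = h(t)A(\mt)-\int_{\mt}\sigma\,d\mu$, which vanishes by the definition~\eqref{vpr} of $h(t)$.

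The main technical obstacle is the Simons-type rewriting of $\partial_t h^i_j$: commuting two covariant derivatives on $h_{rs}$ and collecting the contributions from the Gauss curvature tensor together with the $\ddot{\sigma}$-term and the homogeneity relation requires careful bookkeeping of signs and contractions. All the other equations follow routinely from the Gauss--Weingarten machinery once the first four are in place.
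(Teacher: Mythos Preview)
Your derivation is correct and follows exactly the standard route; the paper itself does not prove this proposition but merely cites \cite{Hu1} and \cite{Ca} for the detailed computation, so there is no alternative argument to compare with.

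One small slip worth flagging: in the step for $\partial_t\sigma$ you write $\partial_t\sigma=\dot{\sigma}^{ij}\partial_t h_{ij}$, but $\sigma$ depends on the eigenvalues of the Weingarten operator $h^i_j=g^{ik}h_{kj}$, so either the metric variation must be included or, more cleanly, one should use $\partial_t\sigma=\dot{\sigma}^{\,j}_{\;i}\,\partial_t h^i_j$. Substituting the already-derived first form $\partial_t h^i_j=\nabla^i\nabla_j\sigma-(h-\sigma)h^i_l h^l_j$ then gives $\ls\sigma+(\sigma-h)\ts(h_{ml}h^l_r)$ immediately, with no need for Codazzi or Lemma~\ref{dispolsim}(iii). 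Taken literally, your formula with the $(0,2)$-tensor evolution would produce $(h-\sigma)\ts(h^2)$ with the wrong sign, which is presumably why you felt extra tools were needed to ``handle the mixed terms''.
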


\subsection*{Mixed volumes}
Mixed volumes are a classical notion in convex analysis, see e.g. \cite{BuZa,Sc}. We recall here the definitions and properties required for our analysis. 

Given a compact convex set $\Omega \subset \mathbb{R}^{n+1}$ and $t>0$, consider the set
$$
\Omega+tB := \{ x+ty \ : \ x \in \Omega, |y| \leq 1 \}.
$$
It can be proved, see \cite[\S 19.3.6]{BuZa} that the volume of this set is a polynomial of degree $n+1$ in $t$ and can be therefore written as
$$
{\rm Vol} (\Omega+tB) = \sum_{i=0}^{n+1} \binom{n+1}{i} \alpha_i t^i,
$$
for suitable coefficients $\alpha_i$ depending on $\Omega$. We then define the $k$-th mixed volume of $\Omega$ as $V_i(\Omega)= \alpha_{n+1-i}$, for $i=0,\dots,n+1$. It can be proved that, for any $\Omega$,
$$
V_{n+1}(\Omega)={\rm Vol}\,(\Omega), \qquad
V_n(\Omega)= A (\partial \Omega), \qquad V_0=\kappa_{n+1},
$$
where $\kappa_{n+1}$ is the volume of the unit ball in $\mathbb{R}^{n+1}$. Thus, mixed volumes can be regarded as a generalization of volume and area. They are known also as {\em cross sectional measures} or {\em quermassintegrals}.

Mixed volumes depend continuously on the set: if $\{\Omega_l\}$ is a sequence of convex sets converging to $\Omega$ in the Hausdorff topology, then
$$
V_i(\Omega_l) \to V_i(\Omega), \qquad i=1,\dots,n+1.
$$

If the convex set $\Omega$ has a smooth boundary, mixed volumes admit an equivalent characterization as boundary integrals of the elementary symmetric functions of the curvatures. In fact, it can be proved that
\begin{equation*}
V_{n-k}(\Omega)=
\begin{cases}
Vol(\Omega)\hspace{3.2cm}\text{if }k=-1\\
(n+1)^{-1}\int_{\mt}\tilde E_kd\mu \hspace{0.5cm}\text{if }k=0,1,\ldots,n-1.
\end{cases}
\end{equation*}

An important result related to the mixed volumes are the so-called Minkowski identities, which say the following. On any closed convex hypersurface $\m$ and for any $l=0,\dots,n-1$, we have
\begin{equation}\label{minkid}
\int_{\mathcal{M}} \tilde E_l d\mu= \int_{\mathcal{M}} u \, \tilde E_{l+1} \, d\mu,
\end{equation}
where $u=(F-p_0,\nu)$ is the support function centered at any fixed point $p_0$. These properties were originally proved by Minkowski and Kubota. It was later proved by Hsiung \cite{Hs} that they also hold without the convexity assumption.

Finally, we recall the following generalized isoperimetric inequality, which can be obtained as a consequence of Alexandrov-Fenchel inequality,  see e.g. \cite[\S 20]{BuZa} or \cite[\S 7]{Sc}. We have, for any compact convex set $\Omega \subset \mathbb{R}^{n+1}$, and any $1 \leq m < l \leq n+1$,
\begin{equation}\label{af}
\frac{V_l^m(\Omega)}{V_m^l(\Omega) }   \leq \frac{V_l^m(B)}{V_m^l(B) } = (\kappa_{n+1})^{m-l} ,
\end{equation}
where $B$ is the unit ball and $\kappa_{n+1}=Vol(B)$, as defined above. In addition, the inequality is strict unless $\Omega$ is a sphere, see formula (7.67) in \cite{Sc} and the following remarks.

\section{Long time existence}

\subsection*{Preservation of convexity}
\begin{prop}\label{conv}
Let $\mt$ be a convex solution of \eqref{fl} on a time interval $[0,T_0)$ and suppose that $h(t) \leq h^*$ for every $t \in [0,T_0)$ for a suitable $h^*>0$. If we set
$\ld_{\min}(t)=\min_{x \in \m_t}\ld_1(x,t)$, then we have
$$
\ld_{\min}(t) \geq \frac{1}{\ld_{\min}(0)^{-1}+{h^*}t}.
$$
\end{prop}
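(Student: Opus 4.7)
The plan is to apply the parabolic maximum principle to the function $\varphi(x,t):=1/\lambda_1(x,t)$, where $\lambda_1$ is the smallest principal curvature, and show that at any spatial maximum of $\varphi(\cdot,t)$ one has $\partial_t\varphi\le h^*$. An ODE comparison with $y'=h^*$ then gives $\max_{\mt}\varphi\le\lambda_{\min}(0)^{-1}+h^*t$, which rearranges to the stated lower bound on $\lambda_{\min}(t)$. I would first assume that $\lambda_1$ is a simple eigenvalue at the critical point, handling the general case by a standard perturbation argument. In a local orthonormal frame diagonalizing $h^i_j$ with $\lambda_1$ in position $(1,1)$, $\partial_t\lambda_1=\partial_t h^1_1$, and the evolution equation from Proposition \ref{eveq} gives
$$\partial_t\lambda_1=\ls h^1_1+\ddot\sigma(\nabla^1 A,\nabla_1 A)+\ts(h_{ml}h^l_r)\lambda_1-\bigl(h+(\alpha k-1)\sigma\bigr)\lambda_1^2.$$
Since $\partial_t\varphi\le h^*$ is equivalent to $\partial_t\lambda_1\ge-h^*\lambda_1^2$, the task reduces to bounding the right-hand side from below by $-h^*\lambda_1^2$.

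The zero-order part is straightforward: by Lemma \ref{dispolsim}(iv), $\sum_p\dot\sigma^p\lambda_p=\alpha k\sigma$; together with $\dot\sigma^p\ge 0$ and $\lambda_p\ge\lambda_1$ this yields $\ts(h^2)\ge\alpha k\sigma\lambda_1$, so
$$\ts(h^2)\lambda_1-\bigl(h+(\alpha k-1)\sigma\bigr)\lambda_1^2\ge(\sigma-h)\lambda_1^2\ge-h^*\lambda_1^2.$$
It therefore remains to show the second-order estimate
$$\ls h^1_1+\ddot\sigma(\nabla^1 A,\nabla_1 A)\ge 0$$
at the spatial minimum of $\lambda_1$, which is the main analytic obstacle. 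Expanding via the Andrews-type identities, one has $\ls h^1_1=\ls\lambda_1+2\sum_{a,\,p:\lambda_p>\lambda_1}\dot\sigma^a(\nabla_a h_{1p})^2/(\lambda_p-\lambda_1)$, and $\ls\lambda_1\ge 0$ at the minimum, while $\ddot\sigma(\nabla_1 A,\nabla_1 A)$ splits into a diagonal part $\sum_{p,q}(\partial^2\sigma/\partial\lambda_p\partial\lambda_q)(\nabla_1 h_{pp})(\nabla_1 h_{qq})$ and an off-diagonal part $2\sum_{p<q}(\dot\sigma^p-\dot\sigma^q)(\lambda_p-\lambda_q)^{-1}(\nabla_1 h_{pq})^2$, the latter being non-positive for $\sigma=E_k^\alpha$.

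Codazzi's symmetry $\nabla_i h_{jk}=\nabla_j h_{ik}$ combined with the criticality $\nabla h_{11}=0$ forces $\nabla_a h_{1b}=0$ whenever $a=1$ or $b=1$, so all surviving gradient contributions involve only $X_{pq}:=\nabla_1 h_{pq}$ with $p,q>1$. For each pair $p<q$ with $p,q>1$, the coefficient of $X_{pq}^2$ obtained by combining the reaction and the off-diagonal $\ddot\sigma$ simplifies to $2\bigl(\dot\sigma^p(\lambda_p-\lambda_1)^2-\dot\sigma^q(\lambda_q-\lambda_1)^2\bigr)/\bigl((\lambda_p-\lambda_1)(\lambda_q-\lambda_1)(\lambda_p-\lambda_q)\bigr)$, and its non-negativity follows using the splitting $E_{k-1}^{(p)}=\lambda_qE_{k-2}^{(p,q)}+E_{k-1}^{(p,q)}$ together with the positivity of $\lambda_q(\lambda_p-\lambda_1)+\lambda_1(\lambda_q-\lambda_1)$. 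The diagonal gradients $\nabla_1 h_{pp}$ produce a quadratic form in $(X_{pp})_{p>1}$ whose matrix has diagonal $2\dot\sigma^p/(\lambda_p-\lambda_1)$ and off-diagonal entries $\partial^2\sigma/\partial\lambda_p\partial\lambda_q$; showing this form is positive semi-definite is the technical core of the argument and reduces, after factoring out $\alpha E_k^{\alpha-1}$, to an inequality among symmetric polynomials that can be verified by a direct computation exploiting decompositions isolating $\lambda_1$. Once the second-order estimate is established, combining with the zero-order bound yields $\partial_t\varphi\le h^*$ at every spatial maximum, and ODE comparison concludes the proof.
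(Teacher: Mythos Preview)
Your route is genuinely different from the paper's. The paper passes to the Gauss map parametrization: the eigenvalues of $\tau_{ij}=\bar\nabla_i\bar\nabla_j u+\bar g_{ij}u$ are the principal radii $r_i=1/\lambda_i$, and the speed is rewritten as $\sigma=\Phi^{-\alpha k}$ with $\Phi(r)=E_k(1/r)^{-1/k}$. The decisive observation is that $\Phi$ is a \emph{known} concave function of the radii. Once the evolution of $\tau_{ij}$ is computed, concavity of $\Phi$ forces all gradient terms (together with the extra $-(\alpha k+1)\Phi^{-1}|\bar\nabla\Phi|^2$ term) to be non-positive in one stroke, and the tensor maximum principle yields $\tau_{ij}\le(\lambda_{\min}(0)^{-1}+h^*t)\,\bar g_{ij}$ directly.

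Your argument in the standard parametrization is sound in outline: the zero-order estimate is correct, and your treatment of the off-diagonal gradient coefficients is right (the formula for the $X_{pq}^2$ coefficient checks out, and the splitting $E_{k-1}^{(p)}=\lambda_q E_{k-2}^{(p,q)}+E_{k-1}^{(p,q)}$ does the job). But the diagonal quadratic form---which you correctly flag as the technical core---is left as a bare assertion. Two remarks. First, your stated matrix omits the diagonal Hessian entries $\ddot\sigma^{pp}=\alpha(\alpha-1)E_k^{\alpha-2}(E_{k-1}^{(p)})^2$, which vanish only when $\alpha=1$. Second, and more importantly, the positive semi-definiteness you need is essentially the inverse-concavity of $E_k^{1/k}$, which is equivalent (under $r_i=1/\lambda_i$) to the concavity of $\Phi$; so your ``direct computation'' would end up re-deriving the very structural fact the paper simply quotes. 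The Gauss map approach is thus not merely an alternative but a genuine shortcut: it isolates the concavity of $\Phi$ as a clean, citable ingredient, whereas your route buries the same content inside an unverified quadratic-form estimate.
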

\begin{proof}
We follow the procedure of \cite{AnMcZh}, where the authors consider flows driven by general homogeneous speeds in the standard 
non volume-preserving case. We will recall the main steps of the proof given there and focus on the differences due to the additional term $h(t)$.

We use the Gauss map parametrization for $\mt$, given by
$$X:\mathbb{S}^n\longrightarrow\mt\subset\mathbb{R}^{n+1}$$
$$\hspace{9mm}z\hspace{1mm}\longmapsto u(z)z+\overline{\nabla}u(z),$$
which takes $z$ to the unique point in $\mt$ with outward normal direction $z$. Here $u$ is the support function $u(z)=\sup_{q\in\mt}(q,z)=(X(z),z)$, and $\overline{\nabla}$ is the gradient on the sphere $\mathbb{S}^n$ with respect to the standard metric $\bar{g}_{ij}$. If we set
$$\tau_{ij}=\overline{\nabla}_i\overline{\nabla}_j u+\bar{g}_{ij}u,$$
then it can be checked that the eigenvalues of $\tau_{ij}$ with respect to $\bar{g}$ are the principal radii of curvature $r_1,\dots,r_n$, with $r_i=\lambda_i^{-1}$.

To describe the flow in this setting, it is convenient to define
$$\Phi(r_1,\ldots,r_n)=\left(\sigma\left(\frac 1{ r_1},\ldots,\frac 1{r_n}\right)\right)^{-1/\alpha k}.
$$
It is well known that $\Phi$ is a concave function, see for example \cite{Li}, and this property plays an important role in the study of the flow.
%
%
We can also regard $\Phi$ as functions of $\tau_{ij}$ and we can write the derivatives of $\Phi$ with respect to $\tau_{ij}$ as
$$\dot{\Phi}^{lm}=\frac{\partial\Phi}{\partial\tau_{lm}}\hspace{2cm}
\ddot{\Phi}^{lm,pq}=\frac{\partial^2\Phi}{\partial\tau_{lm}\partial\tau_{pq}}.$$
Then $\tau_{ij}$ satisfies the following equation, which can be computed as in \cite[Lemma 10]{AnMcZh}.
\begin{align} 
\partial_t\tau_{ij}&=\alpha k\Phi^{-\alpha k-1} [ \dot{\Phi}^{lm}\overline{\nabla}_l\overline{\nabla}_m\tau_{ij}+
\ddot{\Phi}^{lm,pq}\overline{\nabla}_i\tau_{pq}\overline{\nabla}_j\tau_{lm}
-(\alpha k+1)  \Phi^{-1} \overline{\nabla}_i\Phi \overline{\nabla}_j\Phi] \nonumber
\\
& \hspace{0.4cm}
-\alpha k\Phi^{-\alpha k-1} \dot{\Phi}^{lm}\bar{g}_{lm}\tau_{ij}
+(\alpha k-1)\Phi^{-\alpha k}\bar{g}_{ij}+h(t)\bar{g}_{ij}. \label{eqtau}
\end{align}
This is a parabolic equation where the first order terms give a negative contribution, due to the concavity of $\Phi$.
The sum of the first two terms in the second line is also negative definite, as shown in the proof of \cite[Theorem 5]{AnMcZh}.
In contrast to the standard case, we have here an additional positive term $h(t)\bar{g}_{ij}$. Therefore, the radii can increase, but only by an amount which is bounded as long as $h(t)$ is bounded. More precisely, if $r_0$ denotes the largest radius at time $0$, the maximum principle for tensors implies that the matrix $\tau_{ij} - (r_0 +h^* t) \bar{g}_{ij}$ remains negative definite for all times, that is, the principal radii on $\mt$ are bounded from above by $r_0 +h^* t$. The assertion follows.
\end{proof}
\begin{cor}\label{maxtime}
Let $[0,T)$ be the maximal interval of existence of the solution of \eqref{fl}. Then $\mt$ is convex for all $t \in [0,T)$. In addition, if $T<+\infty$, then the curvature of $\mt$ becomes unbounded as $t \to T$.
\end{cor}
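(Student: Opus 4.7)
The plan is a bootstrap argument built on Proposition \ref{conv}. I would set $T^*:=\sup\{s\in[0,T) : \mt \text{ is strictly convex for every } t\in[0,s]\}$. Strict convexity of $\mo$ together with short-time smoothness guarantees $T^*>0$, and the aim is to promote $T^*$ to $T$.

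Suppose for contradiction $T^*<T$. Since the solution is smooth on the compact set $\m\times[0,T^*]\subset\m\times[0,T)$, the second fundamental form is bounded there; hence $\sigma=E_k^\alpha$ is bounded, and so $h(t)=A(\mt)^{-1}\int_\mt\sigma\,d\mu$ admits an upper bound $h^*>0$ on $[0,T^*)$. Proposition \ref{conv} then yields
\[
\lambda_{\min}(t)\geq\frac{1}{\lambda_{\min}(0)^{-1}+h^*T^*}>0,\qquad t\in[0,T^*).
\]
By smoothness up to $t=T^*$ this lower bound persists in the limit, so $\m_{T^*}$ is still strictly convex and strict convexity extends past $T^*$, contradicting the maximality of $T^*$. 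Hence $T^*=T$, which is the first assertion.

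For the blow-up claim, I would invoke the short-time existence theorem recalled in the preliminaries: if $T<+\infty$, then either $\liminf_{t\to T}\min_\mt(\partial\sigma/\partial\lambda_i)=0$ for some $i$, or $\max_\mt|A|^2\to+\infty$. I argue by contradiction that if $|A|^2$ stayed bounded on $[0,T)$, then exactly the same reasoning as above produces $h(t)\leq h^*$ on all of $[0,T)$ and, via Proposition \ref{conv}, a uniform positive lower bound for $\lambda_{\min}$. The principal curvatures would then be confined to a compact subset of the open cone $\Gamma_+$, where $\partial\sigma/\partial\lambda_i=\alpha E_k^{\alpha-1}\partial E_k/\partial\lambda_i$ is continuous and strictly positive, so it too stays bounded away from zero. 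Both alternatives would thus be excluded, contradicting $T<+\infty$. Hence $|A|^2$ must blow up as $t\to T$.

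The only genuine obstacle is the apparent circularity between having an upper bound for $h$ and having the lower curvature estimate of Proposition \ref{conv}. The bootstrap through $T^*$ resolves it: on each closed sub-interval strictly inside $[0,T)$ the curvatures are bounded for free by the smoothness provided by the short-time existence theorem, so the bound on $h(t)$ comes at no cost and can then be fed back into Proposition \ref{conv} to prevent the degeneration of convexity.
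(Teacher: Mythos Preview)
Your proof is correct and follows the same approach as the paper, which compresses the convexity argument into a single sentence (``$h(t)$ is bounded on any compact subinterval of $[0,T)$, so convexity follows from Proposition~\ref{conv}'') that your bootstrap through $T^*$ makes rigorous. For the blow-up claim the paper instead invokes higher-order regularity estimates to extend the flow smoothly past $T$ and restart it, but this and your direct use of the dichotomy in the short-time existence theorem are equivalent ways to reach the same contradiction.
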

\begin{proof}
As $h(t)$ is bounded on any compact subinterval of $[0,T)$, the convexity of $\mt$ follows from the previous proposition.
If $T<+\infty$ and the curvature is bounded, then we also have a bound on $h(t)$ for $t \in [0,T)$, and the previous proposition shows that $\mt$ remains uniformly convex as $t \to T$. This shows that the flow is uniformly parabolic and has bounded curvature on $[0,T)$. Well known regularity results, see e.g. \cite{Mc2, CaSi}, give uniform bounds on all derivatives of the solution and imply that $\mt$ converges to a smooth strictly convex limit as $t \to T$. Then we can restart the flow, in contradiction with the maximality of $T$.
\end{proof}

\subsection*{A monotone quantity} 
An important feature of the flow \eqref{fl} is the monotonicity of a suitable mixed volume, as shown by the next Lemma. 

\begin{lem}\label{dermixv} Along the flow \eqref{fl}, with $\sigma=E_k^{\alpha}$ for a given $k=1,2,\dots n$, we have
$$\frac{d}{dt} \, \int_{\mt}E_{k-1} d\mu\leq0,$$
and the inequality is strict unless $\m_t$ is a round sphere.
\end{lem}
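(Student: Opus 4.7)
The plan is to compute $\frac{d}{dt}\int_{\mt}E_{k-1}\,d\mu$ directly from the evolution equations, cancel out the $HE_{k-1}$ contributions, kill the second-order term by integration by parts, and reduce the sign question to a Chebyshev-type correlation inequality.

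First I would differentiate under the integral using $\partial_t d\mu=H(h-\sigma)d\mu$ together with the evolution
$\partial_t h^i_j=\nabla^i\nabla_j\sigma-(h-\sigma)h^i_l h^l_j$
from Proposition \ref{eveq}. Regarding $E_{k-1}$ as a symmetric function of the Weingarten operator, I have $\partial_t E_{k-1}=\frac{\partial E_{k-1}}{\partial h^i_j}\partial_t h^i_j$, and Lemma \ref{dispolsim}(i) applied to $E_{k-1}$ gives $\frac{\partial E_{k-1}}{\partial\lambda_i}\lambda_i^2=HE_{k-1}-kE_k$ in a diagonalizing frame. Assembling these pieces, the $HE_{k-1}(h-\sigma)$ contributions from $\partial_t E_{k-1}$ and from $\partial_t d\mu$ cancel, leaving
\begin{equation*}
\frac{d}{dt}\int_{\mt}E_{k-1}\,d\mu=\int_{\mt}\frac{\partial E_{k-1}}{\partial h^i_j}\nabla^i\nabla_j\sigma\,d\mu+k\int_{\mt}(h-\sigma)E_k\,d\mu.
\end{equation*}
By Lemma \ref{dispolsim}(iii) applied to $E_{k-1}$, the tensor $\frac{\partial E_{k-1}}{\partial h^i_j}$ is divergence-free, so the first integrand is a total divergence $\nabla^i\bigl(\frac{\partial E_{k-1}}{\partial h^i_j}\nabla_j\sigma\bigr)$ and integrates to zero on the closed hypersurface $\mt$. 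Thus $\frac{d}{dt}\int_{\mt}E_{k-1}\,d\mu=k\int_{\mt}(h-\sigma)E_k\,d\mu$.

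To conclude, I would observe that $\sigma=E_k^{\alpha}$ with $\alpha>0$ is a strictly increasing function of $E_k$ on the positive cone, so $\sigma$ and $E_k$ are comonotone on $\mt$, and Chebyshev's integral inequality for comonotone functions yields
\begin{equation*}
A(\mt)\int_{\mt}\sigma E_k\,d\mu\geq\Bigl(\int_{\mt}\sigma\,d\mu\Bigr)\Bigl(\int_{\mt}E_k\,d\mu\Bigr),
\end{equation*}
which by the definition \eqref{vpr} of $h$ is exactly $\int_{\mt}(h-\sigma)E_k\,d\mu\leq 0$. Equality in Chebyshev forces $E_k$ to be constant on $\mt$, and on a closed strictly convex hypersurface this characterizes round spheres by the classical Alexandrov-type uniqueness theorem (which can be obtained from the Minkowski identities \eqref{minkid} together with the strict form of the Alexandrov--Fenchel inequality \eqref{af}). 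There is no serious obstacle in this argument: the only delicate point is spotting the cancellation of the $HE_{k-1}(h-\sigma)$ terms in the derivative computation, after which the proof reduces to a standard divergence-theorem integration by parts followed by a correlation inequality.
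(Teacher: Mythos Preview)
Your proof is correct and follows the same computation as the paper: both arrive at
\[
\frac{d}{dt}\int_{\mt}E_{k-1}\,d\mu=k\int_{\mt}(h-\sigma)E_k\,d\mu
\]
via the evolution equations, Lemma~\ref{dispolsim}(i) for the cancellation of the $HE_{k-1}(h-\sigma)$ terms, and Lemma~\ref{dispolsim}(iii) to integrate the second-order term by parts. The only difference is in the last step. The paper subtracts the constant $h^{1/\alpha}$ and rewrites the integral as $-k\int_{\mt}(\sigma-h)(E_k-h^{1/\alpha})\,d\mu$, observing that the integrand is pointwise nonnegative because $E_k^{\alpha}>h\iff E_k>h^{1/\alpha}$; you instead invoke Chebyshev's inequality for similarly ordered functions. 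These are two phrasings of the same monotonicity observation, and both give equality exactly when $E_k$ is constant, after which the sphere characterization (the paper cites \cite{Ro}) finishes the argument.
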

\begin{proof}
By Proposition \ref{eveq} and Lemma \ref{dispolsim} and integrating by parts, we have
\begin{align*}
\frac{d}{dt} \, \int_{\mt}E_{k-1} d\mu
&=
\int_{\mt}\frac{\partial E_{k-1}}{\partial h^i_j}\left(\nabla^i\nabla_jE_k+(\sigma-h)h^i_mh^m_j\right)d\mu \\
&\hphantom{=}+\int_{\mt}E_{k-1}H(-\sigma+h)d\mu \\
&= \int_{\mt}\left\{ (\sigma-h)\left(HE_{k-1}-k E_{k}\right)  +E_{k-1}H(-\sigma+h) \right\}d\mu\\
&=k\int_{\mt}E_{k}(-\sigma+h)d\mu = -k \int_{\mt}(\sigma-h)(E_k-h^{1/\alpha})d\mu,
\end{align*}
which is a nonpositive quantity. Moreover, the integral is zero only if $E_k$ is constant on the hypersurface, and this only happens for round spheres, see e.g. \cite{Ro}.
\end{proof}


It is now natural to consider the generalized isoperimetric ratio
$$\mathcal{I}_{n-k+1}(\Omega)=\frac{V_{n-k+1}^{n+1}(\Omega)}{Vol^{n-k+1}(\Omega)}.$$
Then, by Lemma \ref{dermixv}, $\mathcal{I}_{n-k+1}(\Omega_t)$ is decreasing along the flow and, in particular, bounded from above. 
We recall that the inner [resp. outer] radius of $\Omega$ is the radius of the biggest $(n+1)$-dimensional sphere contained in $\Omega$ [resp. the smallest $(n+1)$-dimensional sphere that contains $\Omega$]. We indicate inner and outer radii respectively by  $R_-(\Omega)$ and $R^+(\Omega)$.
We need the following property.
\begin{prop}\label{p1}
For any $n\geq1$, $1 \leq k \leq n$ and $c_1>0$ there exist $c_2=c(c_1,n)$ with the following property. Let $\Omega\subset\mathbb{R}^n$ be a compact, convex set with non empty interior such that $\mathcal{I}_{n-k+1}(\Omega)\leq c_1$. Then $\Omega$ satisfies
$$\frac{R^+(\Omega)}{R_-(\Omega)}\leq c_2.$$
\end{prop}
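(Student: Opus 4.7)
The strategy is to establish a quantitative lower bound
\[
\mathcal{I}_{n-k+1}(\Omega)\ \geq\ c(n,k)\,\bigl(R^+(\Omega)/R_-(\Omega)\bigr)^{(n-k+1)/n},
\]
from which the hypothesis $\mathcal{I}_{n-k+1}\leq c_1$ immediately inverts to the desired $R^+/R_-\leq c_2$. The argument splits into a reduction, via the Alexandrov-Fenchel inequality, to the classical isoperimetric case $k=1$, followed by a convex-geometric estimate for that case.

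For the reduction, I would apply \eqref{af} with $m=n-k+1$ and $l=n$ (nontrivial only for $k\geq 2$, trivial for $k=1$) to bound $V_n(\Omega)$ by a power of $V_{n-k+1}(\Omega)$; combining with the hypothesis $V_{n-k+1}^{n+1}\leq c_1 Vol^{n-k+1}$ then yields a uniform bound on the classical isoperimetric ratio $\mathcal{I}_n(\Omega)=V_n(\Omega)^{n+1}/Vol(\Omega)^n$. For this reduced statement I would combine three standard convex-geometric facts: (i) a \emph{cone lower bound} $Vol(\Omega)\geq c_n R_-^n R^+$, obtained by choosing the centre $p$ of an inscribed ball $B(p,R_-)\subset\Omega$ and a point $v\in\Omega$ with $|v-p|\geq R^+$ (which exists by definition of $R^+$) and noting that $\Omega$ contains the convex hull of $B(p,R_-)\cup\{v\}$, whose volume exceeds that of the straight cone over the equator of $B(p,R_-)$, namely $\kappa_n R_-^n|v-p|/(n+1)\geq\kappa_n R_-^n R^+/(n+1)$; (ii) a \emph{projection/slicing bound} $V_n(\Omega)\geq 2\,Vol(\Omega)/w_{\min}(\Omega)$, obtained by choosing $u$ in the direction of minimum width and combining the Fubini slab estimate $Vol(\Omega)\leq w_{\min}(\Omega)\,|\pi_u(\Omega)|$ with the Cauchy projection inequality $V_n(\Omega)\geq 2|\pi_u(\Omega)|$ (the boundary of a convex body splits into two graphs over every hyperplane, each projecting onto $\pi_u(\Omega)$); and (iii) the \emph{Steinhagen-type bound} $w_{\min}(\Omega)\leq 2(n+1)R_-(\Omega)$, a quick consequence of John's ellipsoid theorem $E\subset\Omega\subset p+(n+1)(E-p)$, which gives $w_{\min}(\Omega)\leq (n+1)\cdot 2a_{\min}(E)\leq 2(n+1)R_-(\Omega)$. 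Combining (ii) with (iii) yields $V_n\geq c_n Vol/R_-$; raising to the $(n+1)$-st power, dividing by $Vol^n$, and inserting (i) produces $\mathcal{I}_n(\Omega)\geq c_n R^+/R_-$, which together with the reduction closes the argument.

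The delicate step is the reverse-isoperimetric estimate $V_n\geq c_n Vol/R_-$ in (ii)-(iii). The most direct tools available — the Minkowski identities and the monotonicity of mixed volumes under inclusion — produce only wrong-sign estimates: the Minkowski identity centred at the inscribed-ball centre gives $(n+1)Vol=\int_{\partial\Omega}u_p\,d\mu\geq R_-\,V_n$, i.e. an \emph{upper} bound $V_n\leq(n+1)Vol/R_-$, and the trivial monotonicity estimate $V_{n-k+1}(\Omega)\geq\kappa_{n+1}R_-^{n-k+1}$ combined with $Vol\leq\kappa_{n+1}R_+^{n+1}$ leads to a lower bound on $\mathcal{I}_{n-k+1}$ scaling as $(R_-/R^+)^{\gamma}$, which is useless here. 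Forcing the surface area up on slender convex bodies genuinely requires the projection argument combined with Steinhagen/John's theorem — this is the technical heart of the proof.
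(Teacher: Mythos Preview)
Your proposal is correct and follows essentially the same strategy as the paper: both first use the Alexandrov--Fenchel inequality \eqref{af} with $m=n-k+1$, $l=n$ to reduce the bound on $\mathcal{I}_{n-k+1}$ to a bound on the classical isoperimetric ratio $V_n^{n+1}/Vol^n$, and then invoke the fact that for convex bodies this ratio controls $R^+/R_-$. The only difference is that the paper handles the second step by citation (to Huisken--Sinestrari and Andrews), whereas you supply a self-contained proof via the cone lower bound on $Vol$, the projection bound on $V_n$, and the Steinhagen/John estimate on $w_{\min}$; your observation that $\max_{x\in\Omega}|x-p|\geq R^+$ (since any ball containing $\Omega$ has radius $\geq R^+$) is correct and the argument goes through.
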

\begin{proof}
We observe that a bound on $\mathcal{I}_{n-k+1}$ implies a bound on the standard isoperimetric ratio involving the area. In fact, we have
$$
\frac{A(\partial\Omega)^{(n+1)}}{Vol(\Omega)^{n}} = \frac{V^{(n+1)}_n(\Omega)}{V^n_{n+1}(\Omega)} \leq 
\frac{[V_{n-k+1}(\Omega)]^{\frac{n(n+1)}{n-k+1}}}{V^n_{n+1}(\Omega)^{n}} =[ \mathcal{I}_{n-k+1}(\Omega)]^\frac n{n-k+1}.
$$
The assertion then follows from \cite[Lemma 4.4]{HuSi}, see also \cite[Proposition 5.1]{An1b}. 
\end{proof}

 Let us set $R_{-}(t)=R_{-}(\Omega_t)$ and $R^{+}(t)=R^{+}(\Omega_t)$. By Proposition \ref{conv} we know that the solution of \eqref{fl} stays strictly convex along the flow. Then we can use Proposition \ref{p1} to get the following corollary.
\begin{cor}\label{raggi}
There exist constants $R^+,R_->0$ such that along the flow
$$R_-<R_-(t)\leq R^+(t)<R^+$$ 
\end{cor}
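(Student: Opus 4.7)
The plan is to combine three ingredients already at hand: the monotonicity of $\int_{\mt} E_{k-1}\,d\mu$ from Lemma \ref{dermixv}, the conservation of the enclosed volume $Vol(\Omega_t)$, and the geometric estimate in Proposition \ref{p1}. The argument naturally splits into two steps: first, uniformly bound the ratio $R^+(t)/R_-(t)$; then promote this ratio bound into separate bounds on the two radii by exploiting the fact that $Vol(\Omega_t)$ is constant.

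For the ratio step, I would use the boundary integral representation of mixed volumes recalled in \S\ref{Preliminaries} to write
$$
V_{n-k+1}(\Omega_t) = \frac{1}{(n+1)\binom{n}{k-1}} \int_{\mt} E_{k-1}\,d\mu,
$$
so that Lemma \ref{dermixv} gives $V_{n-k+1}(\Omega_t) \leq V_{n-k+1}(\Omega_0)$ for every $t \in [0,T)$. Since $Vol(\Omega_t) = Vol(\Omega_0)$ is constant in time, the generalized isoperimetric ratio $\mathcal{I}_{n-k+1}(\Omega_t)$ is bounded from above by a constant $c_1$ depending only on $\Omega_0$. Because $\mt$ is convex throughout the flow by Corollary \ref{maxtime}, Proposition \ref{p1} applies and yields a constant $c_2 = c_2(c_1,n)$ with $R^+(t) \leq c_2\, R_-(t)$ on $[0,T)$.

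For the upgrading step, convexity gives the elementary two-sided bound
$$
\kappa_{n+1}\, R_-(t)^{n+1} \leq Vol(\Omega_t) \leq \kappa_{n+1}\, R^+(t)^{n+1},
$$
with $\kappa_{n+1}$ the volume of the unit ball in $\mathbb{R}^{n+1}$, simply because $\Omega_t$ contains a ball of radius $R_-(t)$ and is contained in one of radius $R^+(t)$. Inserting $R^+(t) \leq c_2 R_-(t)$ into the right-hand inequality and using $Vol(\Omega_t) = Vol(\Omega_0)$ produces a positive lower bound on $R_-(t)$ of the form $(Vol(\Omega_0)/\kappa_{n+1})^{1/(n+1)}/c_2$; inserting the same relation into the left-hand inequality produces an upper bound on $R^+(t)$ of the form $c_2\,(Vol(\Omega_0)/\kappa_{n+1})^{1/(n+1)}$. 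Choosing the constants $R_-$ and $R^+$ slightly smaller resp.\ larger than these expressions gives the strict inequalities demanded by the statement.

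I do not foresee any substantive obstacle: the preceding lemmas, propositions and corollary have been arranged precisely to make this corollary a routine assembly. The conceptual point worth highlighting is that the monotonicity of one mixed volume (here $V_{n-k+1}$) together with the conservation of another (here $V_{n+1} = Vol$) is exactly the right amount of information to control $R_-(t)$ and $R^+(t)$ simultaneously for every admissible $k$.
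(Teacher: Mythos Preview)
Your argument is correct and follows exactly the approach of the paper: bound the isoperimetric ratio via Lemma \ref{dermixv} and the volume constraint, apply Proposition \ref{p1} to control $R^+(t)/R_-(t)$, and then use the elementary comparison with inscribed and circumscribed balls together with the constancy of $Vol(\Omega_t)$ to extract separate bounds on $R_-(t)$ and $R^+(t)$. Your write-up is slightly more explicit about the intermediate constants, but there is no substantive difference.
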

\begin{proof}
 By virtue of the boundedness of the isoperimetric ratio, we can use Proposition \ref{p1} to say that $\frac{R^+(t)}{R_-(t)}$ is uniformly bounded by a constant $c_2$ depending only on $n$, $V_{n-k+1}(\Omega_0)$ and $Vol(\Omega_0)$. By comparing with a ball, we find
  $$Vol(\Omega_t)\leq\kappa_{n+1}(R^+(t))^{n+1} \leq\kappa_{n+1}(c_2R_-(t))^{n+1}  \leq c_2^{n+1}Vol(\Omega_t).$$
Since $Vol(\Omega_t)$ is constant, we obtain bounds from both sides on $R_-(t)$ and $R^+(t)$ which are independent on time.
\end{proof}
\subsection*{Boundedness of the velocity}

Thanks to Corollary \ref{raggi} and Proposition \ref{maxtime}, we are now able to control uniformly the velocity of the flow, and obtain curvature bounds which imply the long time existence for the solution.
\begin{prop}\label{vellim}
There exists a positive constant $C_1$, only depending on $\mo$, such that
$$\sigma(x,t)<C_1$$
for every $(x,t)\in\m\times[0,T)$.
\end{prop}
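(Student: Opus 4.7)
My plan is to prove the uniform upper bound on $\sigma$ by a Tso-type maximum principle argument, applied to an auxiliary function of the form $W = \sigma/(u - c)$, where $u$ is the support function centered at a suitable interior point of $\Omega_t$ and $c>0$ is a fixed constant. By Corollary \ref{raggi}, for each $t\in[0,T)$ one can pick a point $p_t\in\Omega_t$ with $B_{R_-/2}(p_t)\subset\Omega_t$, so that the associated support function $u_{p_t}(x,t):=(F(x,t)-p_t,\nu(x,t))$ satisfies $R_-/2\le u_{p_t}\le 2R^+$ on $\mt$.

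For the core computation, I fix a reference time $\bar t$, take $\bar p = p_{\bar t}$, and set $u:=u_{\bar p}$, $c:=R_-/8$, $W:=\sigma/(u-c)$. On a time neighborhood of $\bar t$ on which $u\ge R_-/4$, using $\sigma = W(u-c)$ together with the evolutions of $\sigma$ and $u$ in Proposition \ref{eveq} (with the origin $\bar p$ fixed), I obtain
\begin{equation*}
\partial_t W = \ls W + \frac{2\,\dot{\sigma}^{ij}\nabla_i u\,\nabla_j W}{u-c} - \frac{c\,W\,\ts(h_{ml}h^l_r)}{u-c} + (\alpha k+1)\,W^2 - \frac{h\bigl(W+\ts(h_{ml}h^l_r)\bigr)}{u-c}.
\end{equation*}
At a spatial maximum of $W$ the gradient term vanishes and $\ls W\le 0$. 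Combining Lemma \ref{dispolsim}(i) with the Newton inequality $\tilde E_k^{1/k}\le H/n$ from Lemma \ref{dispolsim}(ii), one has $\ts(h_{ml}h^l_r)\ge (\alpha k/n)\,H\sigma\ge c_{n,k,\alpha}\,\sigma^{1+1/(\alpha k)}$. Substituting $\sigma = W(u-c)$ and using $u-c\ge R_-/8$ makes the third term on the right negative and of order $W^{2+1/(\alpha k)}$, which dominates the positive quadratic term $(\alpha k+1)W^2$ as soon as $W$ exceeds an explicit threshold $W^*=W^*(n,k,\alpha,R_-)$; the remaining term is nonpositive because $h\ge 0$. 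Hence $\partial_t W \le 0$ at any spatial maximum with $W>W^*$, so $W$ stays uniformly bounded on the time neighborhood, and consequently $\sigma\le 2R^+\,W$ is also bounded.

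The main obstacle is to make the argument global in time with a uniform constant. Since the frozen origin $\bar p$ only works while $u_{\bar p}$ stays above $R_-/4$, the estimate must be iterated to cover all of $[0,T)$, and the delicate point is that each change of origin can momentarily raise the value of $W$ by a factor involving $R^+/R_-$, so that a naive bootstrap on the uniform constant does not close. The resolution exploits the super-quadratic decay term $-C\,W^{2+1/(\alpha k)}$: even if the initial value of $W$ on a new subinterval is temporarily large, it decays rapidly towards $W^*$, and by calibrating the iteration step in terms of both this decay rate and the displacement estimate $|\partial_t F|=|\sigma - h|$, one obtains a self-consistent bound that does not degrade. This yields $\sigma\le 2R^+\,W^*=:C_1$ uniformly on $[0,T)$, with $C_1$ depending only on $\mo$ through the initial maximum of $\sigma$ and the geometric constants $R_\pm$ from Corollary \ref{raggi}.
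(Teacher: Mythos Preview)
Your proposal is correct and follows essentially the same Tso-type strategy as the paper: the auxiliary function $W=\sigma/(u-c)$, the evolution equation you derive, and the lower bound $\ts(h_{ml}h^l_r)\ge c_{n,k,\alpha}\sigma^{1+1/(\alpha k)}$ are exactly the ingredients underlying the paper's sketch, and your handling of the moving-center issue via the super-quadratic decay of $W$ is precisely the mechanism in the references \cite{An1b,Mc1,BeSi} that the paper invokes. The only minor imprecision is the final constant: the self-consistent bootstrap yields $\sigma\le\max\{2R^+W^*,\ C(n,k,\alpha,R_\pm),\ \max_{\m_0}\sigma\}$ rather than simply $2R^+W^*$, but this still depends only on $\m_0$ as claimed.
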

\begin{proof}
The proof uses a technique introduced in \cite{Tso} and widely used in the following literature. We sketch briefly the procedure for the reader's convenience. We introduce the function
$$W(x,t):=\frac{\sigma(x,t)}{u(x,t)-c}$$
where $u(x,t):=(F(x,t)-\qb,\nu(x,t))$ is the support function centered at a point $\qb$ and $c$ is a positive constant. The lower bound on $R_-(t)$ given by Corollary \ref{raggi} ensures that $\qb$ and  $c>0$ can be chosen in such a way that there is a ball centered at $\qb$ of radius at least $2c$ enclosed by $\mt$ for $t$ in a suitable time interval. After computing the evolution equation satisfied by $W$ and applying the maximum principle, we obtain an upper bound for $W$ which also yields a bound for $\sigma$.

In the volume preserving case, the above argument requires some additional technicalities due to the fact that the hypersurfaces $\mt$ are not enclosed in one another, and so we must choose different centers of the enclosed ball in different time intervals. For the details we refer to 
\cite[\S 7]{An1b}, \cite[\S 4]{Mc1}, \cite[\S 3]{BeSi}.
\end{proof}

If $k>1$, the bound on $\sigma$ provided by the above theorem does not imply that the curvature is bounded. In fact, there remains the possibility that some principal curvatures become unbounded while others tend to zero. However, we can already exclude this behaviour on any finite time interval, and obtain that the solution exists for all times. We begin by estimating the mixed volumes.

\begin{cor}\label{hlim} 
All mixed volumes $V_i(\Omega_t)$ are bounded from above and below by positive constants uniformly for $t \in [0,T)$. Similarly,
there are two constants $\beta,\gamma>0$, only depending on $\mo$ such that, on $[0,T)$
$$\beta\leq h(t)\leq \gamma.$$
\end{cor}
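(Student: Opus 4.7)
The strategy is in two steps: first deduce uniform bounds on all mixed volumes $V_i(\Omega_t)$ from the control on the inner and outer radii established in Corollary \ref{raggi}, and then use these together with the velocity bound of Proposition \ref{vellim} to control $h(t)$ from both sides.

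For the mixed volumes, Corollary \ref{raggi} provides fixed radii $R_-,R^+>0$ such that for every $t\in[0,T)$ the body $\Omega_t$ contains some ball of radius $R_-$ and is contained in some ball of radius $R^+$. Mixed volumes are translation invariant and monotone under inclusion of convex bodies (one can see this from the representation $V_i(\Omega)=V(\Omega[i],B[n+1-i])$ as a classical mixed volume, which is monotone in each argument). A direct expansion $\mathrm{Vol}(B_r+tB)=\kappa_{n+1}(r+t)^{n+1}$ gives $V_i(B_r)=\kappa_{n+1}r^i$, so the comparison with the inscribed and circumscribed balls yields
$$
\kappa_{n+1}R_-^i \;\leq\; V_i(\Omega_t) \;\leq\; \kappa_{n+1}(R^+)^i
$$
for every $i=0,\dots,n+1$ and $t\in[0,T)$.

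The upper bound on $h(t)$ is then immediate from Proposition \ref{vellim}: since $\sigma\leq C_1$ pointwise, the definition \eqref{vpr} gives $h(t)\leq C_1=:\gamma$.

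The lower bound on $h(t)$ is the main point. I would combine the integral representation recalled in the preliminaries,
$$
\int_{\mt}E_k\,d\mu = (n+1)\binom{n}{k}V_{n-k}(\Omega_t),
$$
with the mixed volume bounds just obtained, to conclude that $\int_{\mt}E_k\,d\mu\geq c_1>0$ and $A(\mt)=V_n(\Omega_t)\leq c_2$ uniformly in $t$. It then remains to pass from $\int E_k\,d\mu$ to $\int E_k^\alpha\,d\mu$, which I would do by a short case split on $\alpha$. If $\alpha\geq 1$, H\"older's inequality with exponents $\alpha$ and $\alpha/(\alpha-1)$ gives $\int E_k\,d\mu\leq\bigl(\int E_k^\alpha\,d\mu\bigr)^{1/\alpha}A(\mt)^{1-1/\alpha}$, which combined with the previous bounds yields $\int E_k^\alpha\,d\mu\geq c_3>0$. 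If $0<\alpha<1$, the pointwise bound $E_k\leq C_1^{1/\alpha}$ (from Proposition \ref{vellim}) gives $E_k^\alpha=E_k\cdot E_k^{\alpha-1}\geq C_1^{(\alpha-1)/\alpha}E_k$ pointwise, leading to the same uniform lower bound. Dividing by the upper bound on $A(\mt)$ produces the desired $h(t)\geq\beta>0$. The only mildly delicate point in the argument is this case split on $\alpha$; everything else reduces to the geometric comparison with balls afforded by Corollary \ref{raggi}.
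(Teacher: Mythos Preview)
Your proof is correct, and it takes a genuinely different route from the paper's at two points.

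For the mixed-volume bounds, the paper does \emph{not} use the inscribed/circumscribed balls directly. Instead it argues separately: the lower bounds come from the Alexandrov--Fenchel inequalities \eqref{af} together with volume preservation; the upper bounds are obtained by splitting into two ranges of $i$, using \eqref{af} and the monotonicity of $V_{n-k+1}$ (Lemma \ref{dermixv}) for $n-k+1\leq i\leq n$, and for $1\leq i\leq n-k$ using the pointwise inequality $\tilde E_{n-i}\leq \tilde E_k^{(n-i)/k}$ from Lemma \ref{dispolsim} combined with the speed bound of Proposition \ref{vellim}. Your argument via monotonicity of $V_i(\Omega)=V(\Omega[i],B[n+1-i])$ under inclusion, together with the radius bounds of Corollary \ref{raggi}, is shorter and more elementary, and has the minor advantage of not invoking Proposition \ref{vellim} for the mixed-volume step at all.

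For the lower bound on $h(t)$, the paper uses a level-set argument: setting $\tilde\mt=\{E_k\geq\eta\}$, the lower bound on $\int E_k\,d\mu$ and the upper bound on $E_k$ force $A(\tilde\mt)\geq C$ for $\eta$ small, whence $\int\sigma\,d\mu\geq\eta^\alpha A(\tilde\mt)\geq C$. Your case split on $\alpha$ (H\"older for $\alpha\geq 1$, the pointwise comparison $E_k^\alpha\geq C_1^{(\alpha-1)/\alpha}E_k$ for $\alpha<1$) is equally valid and arguably cleaner; both approaches ultimately rest on the same two ingredients, namely the lower bound on $\int E_k\,d\mu$ and the pointwise upper bound on $E_k$ from Proposition \ref{vellim}.
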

\begin{proof}
The bound from below follows  from \eqref{af} and the volume preserving property
$$
V_i(\Omega_t) \geq C \, {\rm Vol}(\Omega_t)^{\frac{n-i}{n+1}} =  C \, {\rm Vol}(\Omega_0)^{\frac{n-i}{n+1}}.
$$
Here we denote by $C$ all constants depending on $i,n$ but not on $t$. Inequalities \eqref{af} also give a bound from above for $n-k+1 \leq i \leq n$, thanks to Lemma \ref{dermixv}. In the case $1 \leq i \leq n-k$, we can use Lemma \ref{dispolsim} and
Proposition \ref{vellim} to obtain
$$
V_i(\Omega_t)= C \int_{\mt} E_{n-i} d\mu \leq C \int_{\mt} E_k^{\frac{n-i}{k}} d\mu \leq C A(\mt) = C V_n(\Omega_t) \leq C.
$$

An upper bound for $h(t)$ follows from Proposition \ref{vellim}. Since $A(\mt)=V_n(\mt)$ is bounded from both sides, a bound from below on $h(t)$ is equivalent to a bound on $\int_{\mt}\sigma d\mu$. Let $\eta>0$, and set $\tilde{\mt}=\{x\in\m\hspace{1mm}|\hspace{1mm}E_k(x,t)\geq\eta\}$. Then,
\begin{align*}
C&\leq V_{n-k}(\Omega_t)=C\int_{\mt}E_kd\mu=C\int_{\tilde{\mt}}E_kd\mu+C\int_{\mt\smallsetminus\tilde{\mt}}E_kd\mu\\
&\leq C A(\tilde{\mt})+C\eta A(\mt)\leq C A(\tilde{\mt})+C\eta.
\end{align*}
If we choose $\eta$ suitably small, we deduce 
$$A(\tilde{\mt})\geq C$$
and we can conclude 
$$\int_{\mt}\sigma d\mu\geq \int_{\tilde{\mt}}\sigma d\mu\geq\eta^{\alpha}A(\tilde{\mt})\geq C.$$
\end{proof}

We can now prove that the solution to \eqref{fl} exists for all times.

\begin{theorem}
The solution $\mt$ of the flow \eqref{fl} exists for $t \in [0,+\infty)$.
\end{theorem}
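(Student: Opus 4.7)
The plan is to argue by contradiction. Suppose the maximal existence time $T$ is finite. By Corollary \ref{maxtime} we then must have $\limsup_{t \to T} \max_{\mt} |A|^2 = +\infty$, so some principal curvature blows up as $t \to T$. The goal is to rule this out using the uniform estimates already established on $[0,T)$.

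First, I would invoke Corollary \ref{hlim}, which gives $h(t) \leq \gamma$ uniformly on $[0,T)$. Plugging $h^* = \gamma$ into Proposition \ref{conv} yields a lower bound
$$\lambda_{\min}(t) \geq \frac{1}{\lambda_{\min}(0)^{-1} + \gamma t} \geq \frac{1}{\lambda_{\min}(0)^{-1} + \gamma T} =: c(T) > 0$$
for every $t \in [0,T)$. Hence the smallest principal curvature stays bounded away from zero on the finite interval $[0,T)$.

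Next, I would exploit this lower bound together with the upper bound on the speed. By Proposition \ref{vellim}, $\sigma = E_k^{\alpha} \leq C_1$ on $[0,T)$, so $E_k \leq C_1^{1/\alpha}$. On a strictly convex hypersurface, by writing
$$E_k = \sum_{1 \leq i_1 < \cdots < i_k \leq n} \lambda_{i_1} \cdots \lambda_{i_k}$$
and selecting a single summand containing the largest curvature $\lambda_{\max}$ and $k-1$ other positive factors (each $\geq \lambda_{\min}$), we obtain the elementary estimate $E_k \geq \lambda_{\max} \lambda_{\min}^{k-1}$. Combining this with the bounds above gives
$$\lambda_{\max}(t) \leq \frac{E_k}{\lambda_{\min}(t)^{k-1}} \leq \frac{C_1^{1/\alpha}}{c(T)^{k-1}}$$
for all $t \in [0,T)$. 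Therefore $|A|^2 \leq n\, \lambda_{\max}^2$ is uniformly bounded on $[0,T)$, contradicting Corollary \ref{maxtime}. Hence $T = +\infty$.

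The main (and essentially only) obstacle is the one the authors highlighted before the statement, namely that an upper bound on $\sigma$ does not by itself bound $|A|^2$ when $k > 1$, because the large curvatures could be balanced by vanishingly small ones. The resolution above relies crucially on the fact that Proposition \ref{conv} provides a \emph{finite-time} lower bound on $\lambda_{\min}$: this is enough to rule out curvature blow-up at any finite $T$, even though the lower bound degenerates as $t \to \infty$ and therefore cannot close up the argument uniformly in time (which is why the more delicate asymptotic behaviour must be treated separately elsewhere in the paper).
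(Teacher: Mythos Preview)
Your proof is correct and follows essentially the same route as the paper: assume $T<\infty$, combine the uniform bound $h(t)\leq\gamma$ from Corollary~\ref{hlim} with Proposition~\ref{conv} to get a positive lower bound $\lambda_0$ on all principal curvatures on $[0,T)$, then use the speed bound from Proposition~\ref{vellim} together with the elementary inequality $E_k\geq \lambda_{\max}\lambda_{\min}^{k-1}$ to bound $\lambda_{\max}$, contradicting Corollary~\ref{maxtime}. The paper's argument is identical in substance, phrasing the key inequality as $\lambda_n=\dfrac{\lambda_{n-k+1}\cdots\lambda_n}{\lambda_{n-k+1}\cdots\lambda_{n-1}}\leq \dfrac{E_k}{\lambda_0^{k-1}}$.
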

\begin{proof} Suppose that the maximal time $T$ is finite. By Proposition \ref{conv} and Corollary \ref{hlim}, we obtain that the principal curvatures are bounded from below for all $t \in [0,T)$ by some constant $\lambda_0$. It follows, using Proposition \ref{vellim},
$$
\ld_n = \frac{\ld_{n-k+1} \cdots \ld_n}{\ld_{n-k+1} \cdots \ld_{n-1}} \leq \frac{E_k}{\lambda_0^{k-1}} \leq \frac{C_1^{\frac{1}{\alpha}}}{\lambda_0^{k-1}},
$$
which shows that the curvatures are also bounded from above on $[0,T)$. This contradicts Corollary \ref{maxtime}
and shows that $T$ is infinite.
\end{proof}


\begin{theorem}\label{mean}
For a general $\alpha>0$, we have $\liminf_{t \to +\infty} \int_{\mt} |\sigma-h(t)|^2 \, d\mu = 0$. If $\alpha=1$, we  have $\lim_{t \to +\infty} \int_{\mt} |\sigma-h(t)|^2 \, d\mu = 0$.
\end{theorem}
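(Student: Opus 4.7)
The starting point is the identity from Lemma \ref{dermixv}:
$$
\frac{d}{dt}\int_{\mt} E_{k-1}\,d\mu \;=\; -k\int_{\mt}(\sigma-h)(E_k-h^{1/\alpha})\,d\mu.
$$
Since $\int_{\mt} E_{k-1}\,d\mu$ is a constant multiple of the mixed volume $V_{n-k+1}(\Omega_t)$, uniformly bounded below by Corollary \ref{hlim}, and the right-hand side is nonpositive (the two factors in the integrand share the sign of $E_k-h^{1/\alpha}$, by monotonicity of $s\mapsto s^\alpha$), integrating over $[0,+\infty)$ yields
$$
\int_0^{+\infty}\!\!\int_{\mt}(\sigma-h)(E_k-h^{1/\alpha})\,d\mu\,dt \;<\; +\infty.
$$
In the case $\alpha=1$ we have $E_k=\sigma$ and $h^{1/\alpha}=h$, so this already reads $\int_0^{+\infty}\!G(t)\,dt<+\infty$, where $G(t):=\int_{\mt}(\sigma-h)^2\,d\mu$, and $\liminf G=0$ is immediate.

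For general $\alpha>0$, to deduce the $\liminf$ claim I would prove the pointwise inequality $(\sigma-h)^2 \leq C\,(\sigma-h)(E_k-h^{1/\alpha})$ with $C$ independent of $x$ and $t$. By the mean value theorem $(\sigma-h)/(E_k-h^{1/\alpha})=\alpha\xi^{\alpha-1}$ for some $\xi$ between $E_k$ and $h^{1/\alpha}$, and I would split cases. If $E_k\geq h^{1/\alpha}/2$, then $\xi\geq\tfrac12\beta^{1/\alpha}>0$ by Corollary \ref{hlim}, and $\xi$ is bounded above because both $E_k=\sigma^{1/\alpha}$ and $h^{1/\alpha}$ are (Proposition \ref{vellim} and Corollary \ref{hlim}), so $\alpha\xi^{\alpha-1}\leq C$. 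Otherwise $|E_k-h^{1/\alpha}|\geq\tfrac12\beta^{1/\alpha}$ while $|\sigma-h|$ is uniformly bounded, so $|\sigma-h|/|E_k-h^{1/\alpha}|\leq C$ as well. Integrating the resulting pointwise estimate in space and time gives $\int_0^{+\infty}\!G(t)\,dt<+\infty$, hence $\liminf_{t\to\infty} G(t)=0$.

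To upgrade this to $\lim G(t)=0$ when $\alpha=1$ I would invoke the standard lemma that a nonnegative integrable function on $[0,+\infty)$ with derivative bounded above must tend to zero; it therefore suffices to show $G'(t)\leq C$. Using Proposition \ref{eveq}:
$$
G'(t)=2\!\int_{\mt}\!(\sigma-h)\ls\sigma\,d\mu \;+\; 2\!\int_{\mt}\!(\sigma-h)^2\ts(h_{ml}h^l_r)\,d\mu \;-\; \int_{\mt}\!(\sigma-h)^3 H\,d\mu,
$$
the term $-2h'(t)\int(\sigma-h)d\mu$ vanishing by the definition of $h$. Integrating by parts the first term, using the Codazzi-type identity $\nabla_i\dot\sigma^{ij}=0$ which holds precisely because $\alpha=1$ (Lemma \ref{dispolsim}(iii)), turns it into $-2\int\dot\sigma^{ij}\nabla_i\sigma\nabla_j\sigma\,d\mu\leq 0$. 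For the remaining two terms I would use Lemma \ref{dispolsim}(i) in the form $\ts(h_{ml}h^l_r)\leq HE_k$, combined with uniform bounds on $|\sigma-h|$ and on $E_k=\sigma$, to dominate both by a constant multiple of $\int_{\mt}H\,d\mu$. The latter equals (up to the factor $n(n+1)$) the mixed volume $V_{n-1}(\Omega_t)$, hence is uniformly bounded by Corollary \ref{hlim}. Therefore $G'(t)\leq C$.

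The main obstacle is the absence of pointwise upper bounds on the principal curvatures: for $k\geq 2$ the mean curvature $H$ can in principle blow up locally along the flow. The plan works only because the sole curvature quantity entering the final estimate is the \emph{integral} $\int_{\mt}H\,d\mu$, which is itself a mixed volume and therefore globally controlled by Corollary \ref{hlim}. The restriction $\alpha=1$ is also essential: for $\alpha\neq 1$ one has $\nabla_i\dot\sigma^{ij}\neq 0$, so the integration by parts produces residual terms involving $E_k^{\alpha-2}\nabla_i E_k$ that cannot be bounded without pinching estimates on the principal curvatures.
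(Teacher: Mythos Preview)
Your proof is correct and follows essentially the same approach as the paper: both derive $\int_0^\infty G(t)\,dt<\infty$ from Lemma \ref{dermixv} via the pointwise estimate $|\sigma-h|\le C|E_k-h^{1/\alpha}|$, and both obtain the one-sided bound $G'(t)\le C$ for $\alpha=1$ by exploiting the divergence-free property $\nabla_i\dot\sigma^{ij}=0$ together with the control of $\int_{\mt}H\,d\mu$ as a mixed volume. The only cosmetic difference is that the paper splits $G(t)=\int\sigma^2-(\int\sigma)^2/|\mt|$ and differentiates each piece separately, whereas you differentiate $G$ directly; your route is slightly more streamlined but the content is the same.
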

\begin{proof}
From the proof of Lemma \ref{dermixv} we know that
$$\int_0^{\infty}\left(\int_{\mt}|\sigma-h||E_k-h^{1/\alpha}|d\mu\right)dt<+\infty.$$
If $0<\alpha\leq1$, it can be easily checked that
$$|\sigma-h|\leq\frac{|E_k-h^{1/\alpha}|}{h^{(1-\alpha)/\alpha}}\leq\beta^{(\alpha-1)/\alpha}|E_k-h^{1/\alpha}|,$$
where the last inequality comes from Corollary \ref{hlim}.
If $\alpha\geq1$, then from Proposition \ref{vellim} and Corollary \ref{hlim} it follows that
$$|\sigma-h|\leq \alpha (\max \{E_k,h^{1/\alpha}\} )^{\alpha-1}|E_k-h^{1/\alpha}|\leq  C|E_k-h^{1/\alpha}|$$
for some constant $C>0$. We conclude that, for any $\alpha>0$, we have
\begin{equation}\label{impr}
\int_0^{\infty} \left (\int_{\mt} |\sigma-h|^2 d\mu  \right) \, dt\leq
 C'\int_0^{\infty} \left (\int_{\mt} |\sigma-h| |E_k-h^{\frac{1}{\alpha}}|d\mu  \right) \, dt< +\infty,
\end{equation}
for some $C'>0$. This implies
$$
\liminf_{t \to +\infty} \int_{\mt} |\sigma-h|^2 d\mu =0.
$$

In the case $\alpha=1$, we have a more precise result by estimating the time derivative of the integral. We have
$$
\int_{\mt} |\sigma-h(t)|^2 d\mu = \int_{\mt} \sigma^2 d\mu - \frac{1}{|\mt|}  \left( \int_{\mt} \sigma d\mu \right)^2.
$$
For $\alpha=1$ the operator $\ls$ is self-adjoint and we find, by Proposition \ref{eveq} and \ref{dispolsim},
\begin{align*}
\frac{d}{dt}\int_{\mt}\sigma d\mu&=\int_{\mt}(\sigma-h)(\ts(h_{ik}h^k_j)-H\sigma)d\mu\\
&=-(k+1)\int_{\mt}E_{k+1}d\mu.
\end{align*}
Since $E_{k+1}$ is uniformly bounded,  as well as the area of $\mt$, then
$$\left|\frac{d}{dt}\int_{\mt}\sigma d\mu\right|\leq C.$$
In addition, we have
$$
\frac{d}{dt} |\mt| = - \int_{\mt} H(\sigma-h) d\mu.
$$
Therefore
$$
\left| \frac{d}{dt} |\mt|  \right| \leq  C \int_{\mt} H d\mu,
$$
which is uniformly bounded,  since the integral of $H$ is equal to $V_{n-1}(\Omega_t)$ up to a constant factor. Finally we compute
$$\frac{d}{dt}\int_{\mt}\sigma^2d\mu=
\int_{\mt}\left(-2|\nabla\sigma|^2_{\dot{\sigma}}+\sigma(\sigma-h)\ts(h_{ik}h^k_j)
-\sigma H(\sigma-h)\right)d\mu,$$
where $| \nabla E_k |^2_{\dot \sigma} = \dot \sigma^{ij} \nabla_i E_k \nabla_j E_k$.
The gradient term gives a negative contribution, while all the remaining terms have a bounded integral by similar arguments as before.
It follows that we can find an upper bound
\begin{equation}\label{onesided}
\frac{d}{dt} \int_{\mt} |\sigma-h|^2 d\mu \leq C,
\end{equation}
where $C$ does not depend on $t$.

Let us set $l:=\limsup_{t \to +\infty} \int_{\mt} |\sigma-h|^2 d\mu$. If $l>0$, then \eqref{impr} implies that $\int_{\mt} |\sigma-h|^2 d\mu$ oscillates infinitely many times between $0$ and $l$ with an arbitrarily large speed as $t \to \infty$. However, the one-sided bound \eqref{onesided} is enough to exclude that $\int_{\mt} |\sigma-h|^2 d\mu$ has arbitrarily fast oscillations. Therefore the integral must tend to zero.

\end{proof}

\section{Smooth convergence of the scalar curvature flow}

We now restrict to the case $k=2$ and $\alpha=1$, where the speed is given by the scalar curvature. In this case, the boundedness of the speed allows us to prove that all principal curvatures are bounded, as shown in the next theorem.

\begin{theorem} \label{curvlim}
There exists a constant  $C_2>0$ such that on $[0,\infty)$
$$\lambda_i\leq C_2\hspace{5mm}\forall i=1,\dots,n.$$
\end{theorem}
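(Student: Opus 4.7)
The plan is to apply the maximum principle to the mean curvature $H$ and reduce the problem to a scalar ODI of the form $y' \leq -\beta y^2 + C$, whose solutions are automatically bounded. Since $\m_t$ stays strictly convex by Corollary \ref{maxtime}, one has $\lambda_i \leq H$ for every $i$, so a uniform bound on $H$ proves the theorem.

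The key algebraic step is to specialize the evolution equation for $H$ from Proposition \ref{eveq} to $\sigma = E_2$, i.e.\ $\alpha = 1$, $k = 2$, so that $\alpha k = 2$. The zero-order bad term is $-(h + E_2)|A|^2$, while Lemma \ref{dispolsim}(i) gives $\ts(h_{ml}h^l_r) = H E_2 - 3 E_3$, so $H\,\ts(h_{ml}h^l_r) = H^2 E_2 - 3 H E_3$. Substituting $|A|^2 = H^2 - 2E_2$ in the bad term produces a contribution $-E_2 H^2$ that cancels the $H^2 E_2$ piece exactly, leaving
$$\partial_t H = \ls H + tr_{g^{-1}}[\ddot\sigma(\nabla_i A,\nabla_j A)] - 3 H E_3 - h H^2 + 2 h E_2 + 2 E_2^2.$$
Because $E_2 = \tfrac12(H^2 - |A|^2)$ is quadratic in $h$, a direct computation yields $\ddot\sigma(B,B) = (tr\,B)^2 - |B|^2$, hence $tr_{g^{-1}}[\ddot\sigma(\nabla_i A,\nabla_j A)] = |\nabla H|^2 - |\nabla A|^2$.

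I would then evaluate the equation at a point where $H(\cdot,t)$ attains its spatial maximum and apply the usual maximum principle argument for the Lipschitz function $H^*(t) := \max_{\m_t} H$. At such a point: $\ls H \leq 0$, because $\dot\sigma^{ij} = H g^{ij} - h^{ij}$ is positive definite on any strictly convex hypersurface (its eigenvalues are $H - \lambda_i > 0$) and the Hessian of $H$ is negative semidefinite; $\nabla H = 0$, so the gradient term reduces to $-|\nabla A|^2 \leq 0$; $-3 H E_3 \leq 0$ by convexity; $-h H^2 \leq -\beta (H^*)^2$ by Corollary \ref{hlim}; and $2 h E_2 + 2 E_2^2 \leq C$ by Proposition \ref{vellim} and Corollary \ref{hlim}. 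Combining,
$$\frac{d}{dt} H^*(t) \leq -\beta (H^*(t))^2 + C,$$
and a standard ODE comparison gives $H^*(t) \leq \max\{H^*(0), \sqrt{C/\beta}\}$ for all $t\geq 0$, proving the theorem.

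The main obstacle is the algebraic cancellation of the $H^2 E_2$ terms, which is special to $\alpha k = 2$: it exploits both the Newton-type identity in Lemma \ref{dispolsim}(i) and the fact that $|A|^2$ is expressible as $H^2 - 2E_2$. For a general $k$ the bad term $-(h + (\alpha k - 1)\sigma)|A|^2$ cannot be rewritten in terms of $\sigma$ and $H$ only, and there is no comparable way to close the differential inequality from the mere boundedness of $\sigma$ supplied by Proposition \ref{vellim} — which is exactly why the smooth convergence part of Theorem \ref{mt} is restricted to the scalar curvature flow.
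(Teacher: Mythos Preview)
Your proof is correct and follows the same overall strategy as the paper: apply the maximum principle to $H$ via its evolution equation, and control the reaction terms using the bounds on $E_2$ (Proposition~\ref{vellim}) and on $h$ (Corollary~\ref{hlim}). The only difference is in how the zero-order term is organized. The paper keeps it in the form
\[
-E_2|A|^2+\bigl(H|A|^2-\mathrm{tr}(A^3)\bigr)H-h|A|^2
\]
and then bounds it by a somewhat delicate eigenvalue argument, splitting off $\lambda_n$ and repeatedly using $\lambda_n\lambda_{n-1}\le E_2\le C_1$, to arrive at $\partial_t H\le (n-1)n^2C_1^2-\tfrac{\beta}{n}H^2$. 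Your substitution $|A|^2=H^2-2E_2$ combined with the Newton-type identity $\ts(A^2)=HE_2-3E_3$ produces the cancellation of the $H^2E_2$ terms in one stroke, leaving $-3HE_3-hH^2+2hE_2+2E_2^2$; the two expressions are algebraically identical, but in your form the sign of each piece is immediately transparent and you obtain the slightly sharper coefficient $-\beta H^2$ instead of $-\tfrac{\beta}{n}H^2$. So: same method, cleaner bookkeeping.
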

\begin{proof}
We can rewrite the evolution of $H$ as in Corollary $4.2$ of \cite{AlSi} :
\begin{equation}\label{evolmc}
\partial_t H= \ls H+|\nabla H|^2-|\nabla A|^2-E_2|A|^2+\left(H|A|^2-({\rm tr}(A^3) \right)H -h|A|^2
\end{equation}
where ${\rm tr}(A^3)=\sum_{i=1}^n \lambda_i^3$. At a local maximum point for $H$, the terms containing derivatives are non positive. Let us analyse the reaction terms.

Since $E_2\leq C_1$, we can estimate 
\begin{align*}
H|A|^2-{\rm tr}(A^3) &=|A|^2\sum_{i=1}^{n-1}\lambda_i+\sum_{i=1}^{n-1}(\lambda_n-\lambda_i)\lambda_i^2
\leq|A|^2\sum_{i=1}^{n-1}\lambda_i+\lambda_n\sum_{i=1}^{n-1}\lambda_i^2\\
&\leq|A|^2\sum_{i=1}^{n-1}\lambda_i+(n-1)\lambda_n\lambda_{n-1}^2
\leq|A|^2\sum_{i=1}^{n-1}\lambda_i+(n-1)C_1\lambda_{n-1}.
\end{align*}
Then we obtain
\begin{align*}
-E_2|A|^2+\left(H|A|^2-({\rm tr}(A^3) \right)H&\leq-\lambda_n|A|^2\sum_{i=1}^{n-1}\lambda_i + H|A|^2\sum_{i=1}^{n-1}\lambda_i \\
& \hspace{4mm} +(n-1)nC_1\lambda_{n-1}\lambda_n\\
&=|A|^2(H-\lambda_n)^2+(n-1)nC_1\lambda_{n-1}\lambda_n\\
& \leq  n \lambda_n^2 (n-1)^2 \lambda_{n-1}^2+(n-1)nC_1\lambda_{n-1}\lambda_n \\
& = (n-1)n\left\{(n-1)(\lambda_n\lambda_{n-1})^2+C_1\lambda_n\lambda_{n-1}\right\}\\
&\leq (n-1)n^2C_1^2.
\end{align*}
We conclude from equation \eqref{evolmc} that, at any local maximum of $H$,
$$\partial_t H \leq(n-1)n^2C_1^2-\frac{\beta}{n}H^2$$
with $\beta$ as in Corollary \ref{hlim}. The maximum principle implies
$$H(x,t)\leq\max\left\{\max_{\m_0}H,nC_1\sqrt{\frac{(n-1)n}{\beta}}\right\}$$
at any time $t\in[0,\infty)$. Since $\mt$ is convex, the same bound holds for any principal curvature.
\end{proof}

To prove convergence to a sphere, we will adapt the strategy used by Ros \cite{Ro} to prove that any closed embedded hypersurface with constant $E_k$ is a sphere. The most delicate step is the result of the next Lemma, since the inverse of $E_2$ may in principle become arbitrarily large as time increases. We have to make a careful use of Proposition \ref{conv}, which gives a control on the rate at which the curvatures can decrease. 

\begin{lem}\label{ros1}
There exists a sequence of times $\{t_l\}$, with $t_l \to +\infty$ such that
$$
\int_{\m_{t_l}} \left| \frac{1}{E_2^{1/2}} - \frac{1}{h^{1/2}} \right| \, d\mu \to 0 \quad \mbox{ as }l \to \infty.
$$
\end{lem}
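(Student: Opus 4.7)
The plan is to exploit the $L^2$ convergence $\|E_2-h\|_{L^2(\mt)}\to 0$ from Theorem \ref{mean} (with $\alpha=1$), together with the pointwise lower bound on principal curvatures from Proposition \ref{conv}. The starting algebraic identity is
$$
\left|\frac{1}{E_2^{1/2}}-\frac{1}{h^{1/2}}\right|=\frac{|E_2-h|}{E_2^{1/2}h^{1/2}(E_2^{1/2}+h^{1/2})}.
$$
To deal with the possibility that $E_2$ becomes small, I would split $\m_t$ into a good set $A_t:=\{E_2\geq h/2\}$ and a bad set $B_t:=\m_t\setminus A_t$.

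On $A_t$, the denominator above is uniformly bounded below, using $h(t)\geq\beta$ from Corollary \ref{hlim}. Hence $\int_{A_t}|E_2^{-1/2}-h^{-1/2}|\,d\mu\leq C\int_{\m_t}|E_2-h|\,d\mu$, and Cauchy--Schwarz together with the uniform area bound (Corollary \ref{hlim}) gives a control by $C\,\|E_2-h\|_{L^2(\m_t)}$, which tends to zero along every sequence $t\to\infty$ by Theorem \ref{mean}.

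On $B_t$, I would bound the integrand crudely by $E_2^{-1/2}+h^{-1/2}$. Since $|E_2-h|\geq\beta/2$ on $B_t$, Chebyshev gives $|B_t|\leq C\|E_2-h\|_{L^2(\m_t)}^2$, so the $h^{-1/2}|B_t|$ piece vanishes. For the remaining $\int_{B_t}E_2^{-1/2}\,d\mu$, I would use Proposition \ref{conv} with the uniform bound $h(t)\leq\gamma$ to get $\lambda_{\min}(t)\geq c/(1+t)$, whence $E_2\geq\binom{n}{2}\lambda_{\min}^2\geq c/(1+t)^2$ pointwise, so $E_2^{-1/2}\leq C(1+t)$ everywhere on $\m_t$. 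Consequently
$$
\int_{B_t}E_2^{-1/2}\,d\mu\leq C(1+t)\,|B_t|\leq C(1+t)\,f(t),\qquad f(t):=\|E_2-h\|_{L^2(\m_t)}^2.
$$

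This is the delicate point: $f(t)\to 0$ is not enough, I need $t\,f(t)\to 0$ along a subsequence. This is where the full integrability $\int_0^\infty f(t)\,dt<\infty$ from \eqref{impr} is crucial. By the mean value theorem applied on each dyadic interval $[2^l,2^{l+1}]$, I can pick $t_l\in[2^l,2^{l+1}]$ with $f(t_l)=2^{-l}\int_{2^l}^{2^{l+1}}f(s)\,ds$, whence $t_l f(t_l)\leq 2\int_{2^l}^{2^{l+1}}f(s)\,ds\to 0$ as $l\to\infty$. Along this sequence $\{t_l\}$ all four contributions vanish, proving the claim. The main obstacle is precisely this interplay between the linear-in-$t$ deterioration of the pointwise lower bound on $E_2^{-1/2}$ and the need for sufficiently rapid $L^2$-decay of $E_2-h$, resolved by picking a sequence adapted to the integrability of $f$ rather than an arbitrary one.
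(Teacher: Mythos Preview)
Your proposal is correct and follows essentially the same approach as the paper: the same algebraic identity, the same decomposition into $\{E_2\geq h/2\}$ and its complement, the Chebyshev bound on the bad set, and the use of Proposition~\ref{conv} to control $E_2^{-1/2}$ by a linear function of $t$. The only cosmetic difference is in the extraction of the sequence: the paper argues that $f(t)>\frac{1}{lt}$ cannot hold for all $t\geq l$ (since $\int\frac{dt}{lt}$ diverges while $\int f<\infty$), obtaining $t_l\geq l$ with $t_lf(t_l)\leq 1/l$, whereas you use the mean value theorem on dyadic intervals; both are standard ways of producing $t_lf(t_l)\to 0$ from $\int_0^\infty f<\infty$.
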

\begin{proof}
We recall from the proof of Lemma \ref{dermixv} that
$$
\frac d{dt} \int_{\mt} H \, d\mu = -2 \int_{\mt} \left| E_2 - h \right|^2 \, d\mu.
$$
This implies that, for any given integer $l>1$, we cannot have $\int_{\mt} \left| E_2 - h \right|^2 \, d\mu > \frac{1}{lt}$ for all large $t$. In particular, we can find a time $t_l \geq l$ such that
\begin{equation}\label{september}
\int_{\m_{t_l}} \left| E_2 - h \right|^2 \, d\mu \leq \frac{1}{ l t_l}.
\end{equation}
Since the area of $\mt$ is bounded by Corollary \ref{hlim} we have, for some constant $C$, 
\begin{equation}\label{september5}
\int_{\m_{t_l}} \left| E_2 - h \right| \, d\mu \leq A(\mt)^{\frac 12} \left( \int_{\m_{t_l}} \left| E_2 - h \right|^2 \, d\mu \right)^{\frac 12}\leq \frac{C}{ \sqrt{l t_l}}.
\end{equation}
For any fixed $t_l$, we want to estimate the measure of the subset of $\m_{t_l}$ where $E_2$ is small compared with its mean value, say $E_2 < h/2$. We have
\begin{eqnarray*}
\int_{\m_{t_l}} \left| E_2 - h \right|^2 \, d\mu & = & \int_{\{ E_2 \leq h/2 \}}  \left| E_2 - h \right|^2 \, d\mu + \int_{\{ E_2 > h/2 \} } \left| E_2 - h \right|^2 \, d\mu \\
& > & \int_{\{ E_2 \leq h/2 \} } \left| E_2 - h \right|^2 \, d\mu \geq \mu \left( \{ E_2 \leq h/2 \} \right) \frac{h^2}{4}.
\end{eqnarray*}
By \eqref{september} and by Corollary \ref{hlim}, this implies
\begin{equation}\label{september2}
\mu \left( \, \left\{ x \, : \, E_2(x,t_l) \leq \frac{h(t_l)}2  \right\} \,\right) 
\leq
\frac{4}{\beta^2} \frac{1}{ l t_l}.
\end{equation}
We further observe that, by Proposition \ref{conv} and Corollary \ref{hlim}, we have
$$
\min_{\m_{t_l}} E_2 > \min_{\m_{t_l}} \lambda_1 \lambda_2 > \frac{1}{(\lambda_{\min}(0)^{-1}+\gamma t_l)^2}.
$$
Then we conclude
\begin{eqnarray*}
\lefteqn{ \int_{\m_{t_l}} \left| \frac{1}{E_2^{1/2}} - \frac{1}{h^{1/2}} \right| \, d\mu  = 
\int_{\m_{t_l}}  \frac {|E_2 - h|}{ E_2^{1/2}  h^{1/2} ( E_2^{1/2} + h^{1/2})} \, d\mu} \\
& \leq &  \int_{\{ E_2 \leq h/2 \}}  \frac {|E_2 - h|}{ E_2^{1/2} h } \, d\mu 
+ \int_{\{ E_2 > h/2 \}}  \frac {|E_2 - h|}{ E_2^{1/2} h } \, d\mu \\
& \leq & \frac{1}{(\min E_2^{1/2})} \mu \left( \{ E_2 \leq h/2 \} \right) 
+ \frac{\sqrt 2}{h^{3/2}} \int_{\m_{t_l}}  |E_2 - h| \, d\mu \\
& \leq & (\lambda_{\min}(0)^{-1}+\gamma t_l) \frac{4}{\beta^2} \frac{1}{l t_l} + \frac{\sqrt 2}{\beta^{3/2}} \frac{C}{\sqrt{l t_l}}\\
&\leq& \frac{4 (\lambda_{\min}(0)^{-1}+\gamma)}{\beta^2 l} + \frac{\sqrt 2 C}{\beta^{3/2}l} \longrightarrow 0 \mbox{ as }l \to \infty.
\end{eqnarray*}
\end{proof}

\begin{lem}\label{ros2}
There exists a sequence of times $\{t_l\}$, with $t_l \to +\infty$ such that
$$
\int_{\m_{t_l}} \frac{1}{\tilde E_1} \, d\mu - (n+1) Vol(\Omega_{t_l}) \to 0 \quad \mbox{ as }l \to \infty.
$$
\end{lem}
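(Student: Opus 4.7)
The plan is to sandwich $\int_{\m_{t_l}}1/\tilde E_1\, d\mu$ between $(n+1)\,Vol(\Omega_{t_l})$ and $(n+1)\,Vol(\Omega_{t_l})+o(1)$ by running Ros's chain of inequalities asymptotically. The lower bound is the Heintze-Karcher inequality $(n+1)\,Vol(\Omega)\le\int_{\partial\Omega}1/\tilde E_1\, d\mu$, valid on any smooth convex body. For the matching upper bound, I would first invoke Newton's inequality $\tilde E_1\ge\tilde E_2^{1/2}$ (Lemma \ref{dispolsim}(ii) with $k=1$) to obtain $\int 1/\tilde E_1\, d\mu \le \int 1/\tilde E_2^{1/2}\, d\mu$, and then apply Lemma \ref{ros1}, rescaled by $\binom{n}{2}^{1/2}$, to identify the right-hand side as $A(\m_{t_l})/\tilde h(t_l)^{1/2}+o(1)$ along the chosen sequence, where I set $\tilde h:=h/\binom{n}{2}$.

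It would then remain to show $A(\m_{t_l})/\tilde h(t_l)^{1/2}\le (n+1)\,Vol(\Omega_{t_l})+o(1)$, which I would achieve by combining two classical inputs from convex integral geometry. First, Minkowski's identity \eqref{minkid} with $l=1$ together with $\int_{\mt}u\,d\mu=(n+1)\,Vol(\Omega_t)$ yields
\[
\int_{\mt}\tilde E_1\, d\mu \;=\; \int_{\mt}u\,\tilde E_2\, d\mu \;=\; \tilde h(t)(n+1)\,Vol(\Omega_t)+\int_{\mt}u(\tilde E_2-\tilde h)\, d\mu,
\]
and since $u\le R^+$ by Corollary \ref{raggi} and $\int_{\mt}|\tilde E_2-\tilde h|\, d\mu\to 0$ by Theorem \ref{mean} (with $\alpha=1$), Cauchy-Schwarz, and the area bound in Corollary \ref{hlim}, the error term is $o(1)$ as $t\to\infty$. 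Second, the Alexandrov-Fenchel inequality applied to the quermassintegrals of $\Omega_t$---equivalently, the log-concavity $I_1^2\ge I_0\,I_2$ of the integrals $I_k:=\int_{\mt}\tilde E_k\, d\mu$, cf.\ \cite{BuZa,Sc}---reads
\[
\Bigl(\int_{\mt}\tilde E_1\, d\mu\Bigr)^{\!2} \;\ge\; A(\mt)\,\int_{\mt}\tilde E_2\, d\mu \;=\; A(\mt)^2\,\tilde h(t).
\]
Comparing the two displays gives $A(\mt)\tilde h(t)^{1/2}\le\tilde h(t)(n+1)\,Vol(\Omega_t)+o(1)$, and dividing by $\tilde h^{1/2}$---uniformly bounded above and away from zero by Corollary \ref{hlim}---produces the required $A(\mt)/\tilde h(t)^{1/2}\le (n+1)\,Vol(\Omega_t)+o(1)$.

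Chaining these bounds along the sequence $t_l$ yields
\[
(n+1)\,Vol(\Omega_{t_l}) \;\le\; \int_{\m_{t_l}}\frac{1}{\tilde E_1}\, d\mu \;\le\; \frac{A(\m_{t_l})}{\tilde h(t_l)^{1/2}}+o(1) \;\le\; (n+1)\,Vol(\Omega_{t_l})+o(1),
\]
which is precisely the statement of the lemma. The main obstacle of the whole argument is not located here---the present step is essentially an asymptotic version of Ros's chain of inequalities---but rather already in Lemma \ref{ros1}, which had to delicately balance the polynomial-in-$t$ blow-up of $1/E_2$ permitted by Proposition \ref{conv} against the smallness of the ``low-curvature'' set coming from Theorem \ref{mean}. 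Once that $L^1$-closeness of $\tilde E_2^{-1/2}$ to its spherical value is available, the two Minkowski--Alexandrov-Fenchel inputs above close the Ros-type chain cleanly.
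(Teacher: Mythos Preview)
Your proof is correct and follows essentially the same Ros-type chain as the paper: the Heintze--Karcher lower bound, Newton's inequality $1/\tilde E_1\le 1/\tilde E_2^{1/2}$ combined with Lemma~\ref{ros1}, and the Minkowski identity \eqref{minkid} to relate $\int_{\mt}\tilde E_1\,d\mu$ with $(n+1)\tilde h(t)\,Vol(\Omega_t)$. The only substantive difference lies in how you establish $A(\mt)\,\tilde h(t)^{1/2}\le\int_{\mt}\tilde E_1\,d\mu+o(1)$: the paper integrates the pointwise Newton inequality $\tilde E_2^{1/2}\le\tilde E_1$ and then controls the error $\int|\tilde E_2^{1/2}-\tilde h^{1/2}|\,d\mu$ via $|\sqrt a-\sqrt b|\le|a-b|/\sqrt b$ and Theorem~\ref{mean}, whereas you invoke the Alexandrov--Fenchel log-concavity $V_{n-1}^2\ge V_nV_{n-2}$ directly, obtaining $A\tilde h^{1/2}\le\int\tilde E_1$ with no error term at all. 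Your variant is slightly slicker at this step but uses a form of Alexandrov--Fenchel stronger than the specialization stated in \eqref{af}; the paper's version stays entirely within the pointwise Newton inequalities of Lemma~\ref{dispolsim}.
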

\begin{proof}
Let us set
$$
\tilde h(t) =  \binom{n}{2}^{-1} h(t) = \frac{1}{A(\mt)} \int_{\mt} \tilde E_2 \, d\mu.
$$
We have, using \eqref{minkid} and  the divergence theorem,
\begin{eqnarray*}
\int_{\mt} \tilde E_{1} \, d\mu & = & \int_{\mt} \tilde E_2 (F,\nu) \, d\mu 
\\
& = &  
\tilde h(t) \int_{\mt} (F,\nu) \, d\mu
+  \int_{\mt} \left( \tilde E_2 - \tilde h(t) \right)  (F,\nu) \, d\mu \\
& = &  
(n+1) \, \tilde h(t) \, Vol (\Omega_t)  +  \int_{\mt} \left( \tilde E_2 -\tilde h(t) \right)  (F,\nu) \, d\mu.
\end{eqnarray*}
Up to a translation, we can assume that $\max | (F,\nu) | \leq R^+(t) \leq C$. Therefore, by Theorem \ref{mean}, we have
$$
 \left|  \int_{\mt} \left( \tilde E_2 -\tilde h(t) \right)  (F,\nu) \, d\mu \right| \leq C \int_{\mt} 
 \left| \tilde E_2 -\tilde h(t) \right|  \, d\mu \rightarrow 0 \mbox{ as }t \to \infty.
$$
We deduce
\begin{equation}\label{favk2}
\lim_{t \to \infty}  \int_{\mt} \tilde E_1 d\mu - \tilde h(t) (n+1) Vol(\Omega_t)=0.
\end{equation}
On the other hand, by Lemma \ref{dispolsim}
\begin{equation}\label{september3}
\int_{\mt} \tilde E_1 d\mu \geq \int_{\mt} \tilde E_2^{1/2} d\mu = \tilde h(t)^{1/2} A(\mt) + \int_{\mt} (\tilde E_2^{1/2} - \tilde h(t)^{1/2}) d\mu.
\end{equation}
Since we have $|\sqrt a - \sqrt b| \leq \frac{|a-b|}{\sqrt b}$ for any numbers $a,b>0$, we deduce from Corollary \ref{hlim} and Theorem \ref{mean}
$$
\int_{\mt} |\tilde E_2^{1/2} - \tilde h(t)^{1/2}|d\mu \leq  \frac{1}{\beta^{1/2}}\int_{\mt} |\tilde E_2 - \tilde h(t)| d\mu \rightarrow 0 \mbox{ as }t \to \infty.
$$
Therefore \eqref{september3} implies
$$
\liminf_{t \to \infty} \int_{\mt} \tilde E_1 d\mu - \tilde h(t)^{1/2} A(\mt) \geq 0.
$$
Together with \eqref{favk2}, we find
\begin{equation}\label{ros3}
\liminf_{t \to \infty} \left( (n+1) Vol(\Omega_t) - \frac{A(\mt) }{ \tilde h(t)^{1/2} } \right) \geq 0.
\end{equation}
On the other hand, by Lemma \ref{dispolsim}, we have
$$
\int_{\mt} \frac{1}{\tilde E_1} \, d\mu \leq \int \frac{1}{\tilde E_2^{1/2}} \, d\mu = \frac{A(\mt)}{\tilde h(t)^{1/2}} + \int_{\mt} \left(  \frac{1}{\tilde E_2^{1/2}} - \frac{1}{\tilde h^{1/2}} \right) \, d\mu.
$$
If we pick the sequence $\{t_l\}$ such that Lemma \ref{ros1} holds, we find
$$
\liminf_{l \to \infty}  \frac{A(\m_{t_l})}{\tilde h(t_l)^{1/2}} - \int_{\m_{t_l}} \frac{1}{\tilde E_1} \, d\mu \geq 0.
$$
By virtue of \eqref{ros3}, we conclude
$$
\liminf_{l \to \infty} (n+1) Vol(\Omega_{t_l})- \int_{\m_{t_l}} \frac{1}{\tilde E_1} \, d\mu \geq 0.
$$
On the other hand, by Theorem 1 in \cite{Ro}, any smooth closed hypersurface $\m$ with $\m=\partial \Omega$ satisfies the reverse inequality
$$
\int_{\m} \frac{1}{\tilde E_1} \, d\mu \geq (n+1) Vol(\Omega).
$$
From the two last inequalities, the assertion follows.
\end{proof}

\begin{remark}
{\rm Equality $\int_\m \tilde E_1 ^{-1} \, d\mu = (n+1) Vol(\Omega)$ characterizes the sphere, as shown in \cite{Ro}. It can be checked that the proof of Lemma \ref{ros2} also holds, with some additional computation, in the case of general $k,\alpha$. The next estimate, instead, will make essential use of the curvature bound of Theorem \ref{curvlim}.}
\end{remark}

\begin{prop}
\label{convr}
We have $\lim_{t \to \infty} R^+(t) = \lim_{t \to \infty }R_-(t)=R$, where $R$ is the radius of a sphere enclosing the same volume as $\m_0$.
\end{prop}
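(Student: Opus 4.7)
The overall strategy is to combine the near-equality in the Heintze–Karcher–Ros inequality that Lemma~\ref{ros2} produces along a subsequence with the monotonicity of the mixed volume $V_{n-1}(\Omega_t)$ from Lemma~\ref{dermixv} and the equality case of the Alexandrov–Fenchel inequality. The first step is to handle the particular sequence $\{t_l\}$ given by Lemma~\ref{ros2}. Since $\int_{\m_{t_l}}\tilde E_1^{-1}\,d\mu - (n+1)\,\mathrm{Vol}(\Omega_{t_l})\to 0$, the $L^1$-stability estimate of Magnanini and Poggesi \cite{MaPo}, whose hypotheses are secured by Theorem~\ref{curvlim} and Corollary~\ref{raggi}, yields concentric balls $B_l^-\subset \Omega_{t_l}\subset B_l^+$ with $R_{B_l^+}-R_{B_l^-}\to 0$. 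Volume preservation forces $R_{B_l^-}\le R\le R_{B_l^+}$, so both radii converge to $R$, and the sandwich $R_{B_l^-}\le R_-(t_l)\le R\le R^+(t_l)\le R_{B_l^+}$ gives $R^+(t_l),R_-(t_l)\to R$.

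To upgrade this to a full limit I would exploit the monotonicity available for $k=2$. By Lemma~\ref{dermixv}, $V_{n-1}(\Omega_t)$ is non-increasing in $t$, and by~\eqref{af} it is bounded below by $V_{n-1}(B_R)$; hence $V_{n-1}(\Omega_t)\downarrow V_\infty\ge V_{n-1}(B_R)$. The Hausdorff convergence of $\Omega_{t_l}$ to a translate of $B_R$ established above, combined with the continuity of mixed volumes under Hausdorff convergence, forces $V_\infty = V_{n-1}(B_R)$. For an arbitrary sequence $s_m\to\infty$, Corollary~\ref{raggi} and Blaschke's selection theorem yield, after suitable translations, a subsequence of $\Omega_{s_m}$ converging in Hausdorff distance to a compact convex body $\Omega_*$ satisfying $V_{n+1}(\Omega_*)=\mathrm{Vol}(\Omega_0)$ and $V_{n-1}(\Omega_*)=V_\infty=V_{n-1}(B_R)$. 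The rigidity in \eqref{af} then forces $\Omega_*$ to be a ball, which the volume constraint identifies with a translate of $B_R$. Hence $R^+(s_m),R_-(s_m)\to R$ along this subsequence, and since the limit is independent of the subsequence, $\lim_{t\to\infty}R^+(t)=\lim_{t\to\infty}R_-(t)=R$.

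The main obstacle is the first step: verifying the applicability of the Magnanini–Poggesi stability estimate along the flow using the uniform bounds from Theorem~\ref{curvlim} and Corollary~\ref{raggi}, and transferring its conclusion into the form of a sandwich by concentric balls with radii converging to a common value. Once this is in place, the remainder is a standard combination of Blaschke compactness with the equality case of the Alexandrov–Fenchel inequality.
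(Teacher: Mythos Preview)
Your proposal is correct and follows essentially the same approach as the paper. Both arguments apply the Magnanini--Poggesi stability estimate along the subsequence $\{t_l\}$ from Lemma~\ref{ros2} (with the uniform curvature bound of Theorem~\ref{curvlim} controlling the inner uniform radius and Corollary~\ref{raggi} the diameter), then pass from the subsequence to the full limit via the monotonicity of a mixed-volume quantity, Blaschke compactness, and the rigidity in~\eqref{af}; the only cosmetic differences are that the paper packages the monotone quantity as the ratio $\mathcal{I}_{n-1}(\Omega_t)=V_{n-1}^{n+1}/\mathrm{Vol}^{n-1}$ rather than $V_{n-1}$ itself (equivalent since the volume is fixed), and phrases the upgrade step as a proof by contradiction rather than a direct subsequence argument.
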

\begin{proof}
We first show that $R^+(t)-R_-(t)$ tends to zero along the time sequence given by the previous lemma. This is an easy consequence of a stability estimate recently proved by Magnanini and Poggesi in \cite{MaPo}. We recall here their result, with some simplifications due to the fact that we are dealing with convex sets.

For a fixed $t>0$, we take $v:\overline{\Omega}_t\rightarrow \mathbb{R}$ as the solution of
\begin{equation}
\label{v}
\begin{cases}
\Delta v=n+1  \hspace{3mm}\text{ on } \Omega_t\\
v=0 \hspace{3mm} \text{ on } \mt .
\end{cases}
\end{equation}
We then consider the function $w(x)=q(x)-v(x)$, where $q(x)=\frac{1}{2}(|x-z|^2-a)$ for some fixed $z\in\mathbb{R}^{n+1}$ and $a\in\mathbb{R}$. By Theorem $2.6$ and formula $(3.2)$ in \cite{MaPo}, the hessian matrix of $w$ satisfies the inequality 
\begin{equation}
\label{dis2}
\frac{1}{n}\int_{\Omega_t}|Hess(w)|^2 dV \leq\int_{\mt}\frac{1}{\tilde E_1}d\mu-(n+1)Vol(\Omega_t).
\end{equation}
Then we have the following estimate, which follows from Theorem $3.4$, Lemma $3.7$ and Theorem $3.10$ of \cite{MaPo}: there exist constants $\bar C, \bar \varepsilon>0$ such that, if we choose $z$ in the definition of $w$ as a local minimum point of $v$ in $\Omega_t$, we have
\begin{equation}\label{magpog}
R^+(t)-R_-(t)\leq \bar C ||Hess(w)||^{\frac{2}{n+3}}_{L^2(\Omega_t)}\hspace{.5cm}\text{if}\hspace{.5cm}
||Hess(w )||_{L^2(\Omega_t)}<  \bar \varepsilon.
\end{equation}
Here, the constants $\bar C, \bar \varepsilon>0$ depend on the diameter and the volume of $\Omega_t$ and on the {\em inner uniform radius} of $\mt$, defined as
$$r_i(t)=\inf_{p\in \mt} \sup\{r>0 |\exists q\in\Omega_t \text{ such that }
B_r(q) \text{ touches } \mt \text{ at }p \text{ from inside} \}.$$
For a convex $\Omega_t$, such a quantity is controlled by the inverse of the curvature of $\mt$, and therefore is uniformly bounded from below by Theorem \ref{curvlim}. The diameter and the volume of $\Omega_t$ are also uniformly bounded, by the results of the previous sections. It follows that estimate \eqref{magpog} holds with constants $\bar C, \bar \varepsilon>0$ independent of $t$. In view of estimates \eqref{dis2}, \eqref{magpog}, we obtain from Lemma \ref{ros2} that $R^+(t_l)-R_-(t_l) \to 0$ as $l \to +\infty$.

The convergence of the radii, together with the volume constraint, shows that the sets $\Omega_{t_l}$ converge in the Hausdorff metric to a sphere $B_R$ with the same volume as $\Omega_0$. By the continuity of the mixed volumes with respect to the Hausdorff convergence, the generalized isoperimetric ratio $\mathcal{I}_{n-k+1}(\Omega_{t_l})$ tends to the value of the sphere $\mathcal{I}_{n-k+1}(B)$, which is the smallest possible value by Alexandrov-Fenchel's inequality. Since $\mathcal{I}_{n-k+1}(\mt)$ is monotone decreasing by Lemma \ref{dermixv}, it follows that
\begin{equation}\label{september4}
\lim_{t \to \infty }\mathcal{I}_{n-k+1}(\mt) = \inf_{t \geq 0} \mathcal{I}_{n-k+1}(\mt) = \mathcal{I}_{n-k+1}(B_R).
\end{equation}

Let us now suppose, for the sake of contradiction, that $\limsup_{t \to \infty} R^+(t)-R_-(t)>0$. Then, by our diameter bounds and by Blaschke's compactness theorem for convex sets, we could find a sequence of times $\{t_h\}$ such that $\Omega_{t_h}$, up to translations, converge in the Hausdorff metric to a limit set $\Omega_\infty$ which is not a sphere. By \eqref{september4} and by the continuity of mixed volumes, such a set should have the same generalized isoperimetric ratio as the sphere. This is a contradiction, since the best constant in \eqref{af} is only attained by the spheres.
\end{proof}

The convergence of the radii allows us to use the same argument as in \cite{BeSi} to prove a bound from below on the speed. This will imply the uniform parabolicity of the flow and the regularity required to conclude the proof Theorem \ref{mt}.

\begin{prop}\label{limbv}
There exists a positive constant $C_3$, only depending on $n$ and $\mo$, such that
$$E_2(x,t)>C_3$$
for every $(x,t)\in\m\times[0,\infty)$.
\end{prop}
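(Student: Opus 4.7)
The plan is to adapt the maximum-principle argument of \cite[\S 3]{BeSi}, using the convergence of the radii from Proposition \ref{convr} to anchor the analysis to an interior point that remains enclosed by the evolving region on a uniform time window.

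By Proposition \ref{vellim}, Theorem \ref{curvlim}, and Corollary \ref{hlim}, the normal velocity $|\sigma - h|$ of $\mt$ is uniformly bounded, say $|\sigma - h| \leq V_0$. By Proposition \ref{convr}, there exist $T_0 > 0$ and $\rho > 0$ such that $R_-(t) \geq 2\rho$ for all $t \geq T_0$. Fix $t^* \geq T_0$ and choose $\qb = \qb(t^*)$ with $B_{2\rho}(\qb) \subset \Omega_{t^*}$; since the normal speed of $\mt$ is bounded by $V_0$, the ball $B_\rho(\qb)$ remains inside $\Omega_s$ for every $s \in [t^*, t^* + \tau]$, where $\tau := \rho/V_0$ is independent of $t^*$. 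On this window the support function $u(x,s) := (F(x,s) - \qb, \nu(x,s))$ satisfies $\rho \leq u(x,s) \leq C$ for a universal constant $C$. I would then apply the parabolic maximum principle to a function of the form
$$
W(x,s) = \Phi\bigl(\sigma(x,s), u(x,s)\bigr)\, e^{\theta(s-t^*)},
$$
modeled on the choice of \cite[\S 3]{BeSi}, where $\Phi$ is increasing in $1/\sigma$ (for instance $\Phi = 1/(\sigma(u-\rho/2))$) and $\theta > 0$ is to be fixed. Using Proposition \ref{eveq} and part $(i)$ of Lemma \ref{dispolsim}, one computes $\partial_s W - \ls W$ and substitutes the gradient relation $\nabla W = 0$ at an interior maximum to cancel cross terms. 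The crucial observation is that the contribution $h - 3\sigma \geq \beta - 3\sigma$ in $\partial_s u$ is strictly positive when $\sigma$ is small, and — through the $u$-dependence of $\Phi$ — furnishes exactly the control needed to offset the unfavorable reaction $-\sigma^{-2}(\sigma - h)\ts(h_{m\ell}h^\ell_r)$ coming from $\partial_s(1/\sigma)$; the exponential factor absorbs any residual positive reaction. This yields $W \leq C'$ at any interior maximum on the window, with $C'$ independent of $t^*$, and hence a uniform lower bound on $\sigma$ there.

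Since $\tau$ is independent of $t^*$, overlapping windows of length $\tau/2$ cover $[T_0, \infty)$ and produce a uniform positive lower bound on $\sigma$ for $t \geq T_0$; on $[0, T_0]$ the flow is smooth and strictly convex by Proposition \ref{conv} and Corollary \ref{maxtime}, so continuity yields a positive lower bound there as well, and the two combine to the desired estimate. The main obstacle is the design of $\Phi$ and the choice of $\theta$: one must verify that, in spite of the more intricate tensor structure of $\dot\sigma^{ij}$ and $\ddot\sigma$ for $\sigma = E_2$ compared with the scalar $f(H)$ setting of \cite{BeSi}, the reaction terms at a maximum of $W$ still carry the favorable sign. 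The upper curvature bound of Theorem \ref{curvlim} is essential at this stage, since it ensures that $\dot\sigma^{ij}$ and $\ts(h_{m\ell}h^\ell_r)$ are uniformly bounded from above, a property used repeatedly in closing the computation.
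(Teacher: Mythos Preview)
Your overall strategy --- a Tso-type auxiliary function built from $\sigma$ and the support function, analyzed via the maximum principle on uniform time windows anchored by Proposition~\ref{convr} and the speed bound --- is exactly the paper's. The gap is in \emph{how much} of Proposition~\ref{convr} you use. You extract only an inner-radius bound $R_-(t)\ge 2\rho$, but that already follows from Corollary~\ref{raggi} and is not sufficient. The paper exploits the full conclusion that \emph{both} radii tend to the \emph{same} limit $R$: for $\varepsilon>0$ and $t_0$ large one finds $q$ with $B_{R-\varepsilon}(q)\subset\Omega_{t_0}\subset B_{R+\varepsilon}(q)$, and by the speed bound this persists (with $2\varepsilon$) on $[t_0,t_0+\tau]$. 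Setting $c=R+3\varepsilon$ gives $\varepsilon\le c-u\le 5\varepsilon$ on the whole window, so $c-u$ is \emph{small}. With $W=E_2/(c-u)$ one obtains at a spatial minimum
\[
\partial_t W \;\ge\; -W^2(3+cH)\;+\;W\,h\Bigl(\tfrac{1}{c-u}-H\Bigr),
\]
and the choice $\varepsilon=(10\bar H)^{-1}$ forces $1/(c-u)\ge 2\bar H>H$, so the linear term is uniformly positive. This yields a lower bound on $W$, hence on $E_2$, that is \emph{independent of the window}.

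With your candidate $\Phi=1/(\sigma(u-\rho/2))$ the factor $u-\rho/2$ sits between $\rho/2$ and roughly $R^+$ and is not small, and the reaction balance does not close: using $\ts(h_{ml}h^l_r)\le HE_2$ the unfavourable contribution $v(h-\sigma)\ts(h_{ml}h^l_r)$ is bounded only by $hH\le\gamma C_2$, while the favourable term $-h/(u-\rho/2)$ is at best $-\beta/R^+$, and there is no reason for the second to dominate the first. One is left with $(\partial_t-\ls)\Phi\le K\Phi$ for some constant $K>0$, and an exponential weight $e^{\pm\theta(s-t^*)}$ then gives only $\Phi(t)\le\Phi(t^*)e^{K\tau}$ on each window --- a bound that depends on the value at the start of the window and degenerates under iteration. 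The exponential factor cannot manufacture the missing sign; what is actually needed is the two-sided pinching $c-u\le 5\varepsilon$, coming from the convergence of \emph{both} radii, which makes $1/(c-u)$ large enough to beat $H$.
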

\begin{proof}
By the previous proposition, for any $\varepsilon>0$, there exists $T_\varepsilon$ such that, for any $t_0 \geq T_\varepsilon$, there exists a point $q=q(t_0)$ such that
$$
B_{R-\varepsilon}(q) \subset \Omega_{t_0} \subset B_{R+\varepsilon}(q).
$$
Since the speed is bounded, there exists $\tau=\tau(\varepsilon)$ such that
$$
B_{R-2\varepsilon}(q) \subset \Omega_{t} \subset B_{R+2\varepsilon}(q), \qquad t \in [t_0,t_0+\tau].
$$
If we now consider the support function $u=(F-q,\nu)$ and we set $c= R+3\varepsilon$, we have
$$
\varepsilon \leq c-u \leq 5 \varepsilon
$$
on $\mt$, for every $t \in [t_0,t_0+\tau]$. On this time interval, we consider the function
$$W(x,t)=\frac{E_2(x,t)}{c-u(x,t)}.$$
Standard computations show that
\begin{align*}
(\partial_t-\ls)W&=\frac{2}{c-u}\langle\nabla u,\nabla W\rangle_{\dot{\sigma}}-3W^2-\frac{cW}{c-u}(HE_2-3E_3)\\
&\hspace{4mm}+\frac{h}{c-u}W-\frac{h}{c-u}(HE_2-3E_3)\\
&\geq\frac{2}{c-u}\langle\nabla u,\nabla W\rangle_{\dot{\sigma}}-W^2(3+cH)+Wh\left(\frac{1}{c-u}-H\right).
\end{align*}
Let $\bar H$ denote the supremum of $H$ along the flow, and let us choose $\varepsilon=(10 \bar H)^{-1}$, so that
$$
\frac{1}{c-u}-H \geq \frac{1}{5 \varepsilon} - \bar H = \bar H.
$$
Then, at any point where the minimum of $W$ on $\mt$ is attained, we have
$$
\partial_t W  \geq-W^2(3+c \bar H)+ W h \bar H \geq W(\beta \bar H - W(4+R \bar H)).
$$
This shows that $W$ cannot attain a new minimum smaller than $\frac{\beta\bar H}{4+R \bar H}$ at a time $t \geq T_\varepsilon$, and implies that $E_2$ is bounded from below by a positive constant for all times.
\end{proof}

From Proposition \ref{limbv}, it follows that at least two principal curvatures are uniformly bounded from below, i.e. there exists $\lambda>0$ such that
 $$\lambda_{n-1}(x,t),\lambda_n(x,t)>\lambda \hspace{0.5cm}\text{ for all } (x,t)\in\m\times[0,\infty).$$
 Then the operator $\Delta_{\dot{\sigma}}$ is uniformly parabolic for all $t \in [0,\infty)$: In fact, given $\omega=(\omega_1\ldots,\omega_n)\in\mathbb{R}^n$,
 $$\dot{\sigma}^{ij}\omega_i\omega_j=\frac{\partial E_2}{\partial\lambda_i}\omega_i^2=(H-\lambda_i)\omega_i^2
 \geq(H-\lambda_n)|\omega|^2\geq\lambda_{n-1}|\omega|^2>\lambda|\omega|^2.$$
 Arguing as in the proof of Theorem $6.4$ in \cite{CaSi} and Proposition $4.3$ in \cite{BeSi}, we find that all the derivatives of the curvatures are bounded on $[0,\infty)$. Therefore, the Hausdorff convergence of the $\mt$'s to a sphere is also a convergence in the $C^\infty$ norm.
 
Finally, in order to obtain the exponential rate of the convergence we can observe that, after a certain time $t^*$, the pinching condition $(1.6)$ appearing in \cite{CaSi} holds. Then we can apply Theorem $7.7$ of that paper to conclude that the hypersurfaces $\mt$ converge exponentially to a round sphere, with no need to add space isometries. 
The proof of Theorem \ref{mt} is complete.

\bigskip

\noindent {\bf Acknowledgments} 
Carlo Sinestrari was partially supported by the research group GNAMPA of INdAM (Istituto Nazionale di Alta Matematica).

\end{document}